\newtheorem*{lemma*}{\bf Lemma}
\newtheorem*{sublemma*}{\bf Sublemma}
\newtheorem*{claim*}{\bf Claim}
\newtheorem*{complement*}{\bf Complement}
\renewcommand{\epsilon}{\varepsilon}
\newcommand{\cal}[1]{\mathcal{#1}}
\newcommand{\ov}[1]{\overline{#1}}
\newcounter{rememberItem}
\def\bEA{\begin{eqnarray*}}
\def\eEA{\end{eqnarray*}}
\def\bEAn{\begin{eqnarray}}
\def\eEAn{\end{eqnarray}}
\def\wt{\widetilde}
\def\wh{\widehat}
\def\on{\operatorname}
\def\cal{\mathcal}
\def\C{{\mathbb C}}
\def\R{{\mathbb R}}
\def\Z{{\mathbb Z}}
\newtheorem{lemma}{Lemma}
\newtheorem{proposition}[lemma]{Proposition}
\newtheorem{corollary}[lemma]{Corollary}
\newcommand{\Emb}{\mathrm{Emb}}
\newcommand{\height}{\on{ht}}
\title{Yet another sphere eversion}
\begin{document}

\begin{abstract}
We present a (possibly) new sphere eversion based on the contractibility* of a certain subset of the space of immersions of the circle in the plane. (*: by strong deformation retraction)
\end{abstract}

\maketitle

%\section{Introduction}

A sphere eversion is a continuous path in the space of immersions of the sphere in $\R^3$, from an embedded sphere to an embedded sphere with orientation reversed, i.e.\ with the inner and outer sides reversed.
Existence of sphere eversions has been discovered by Smale as a consequence of a stronger theorem. Here is not the place for a history of the subject: the article \cite{S} does it very well.

We decided to split our presentation into two articles. The first and present one focuses on a (possibly) new proof of \emph{existence} of the eversion. It goes quickly to the objective, trying to stay elementary, and without trying at getting the most elegant statements. For this purpose we first present the procedure in an intuitive way in Sections 1 to 4, stating only the most meaningful lemmas/proposition. We state a couple of technical lemmas used in the actual proof in Section 5, and leave all the proofs to the appendix.
The second article will refine a little bit the procedure, answer natural questions that arise, and will try to realize the scheme proposed above with a minimal number of generic transitions.

All the lemmas proved here are already known and their proofs are included only for self-containedness. Their statements and proofs assume that the reader has knowledge of the following bases of differential geometry: differential, manifold, immersion, embedding, implicit function theorem, tubular neighborhood, of topology: homotopy, isotopy, ambient isotopy, and of analysis: partial derivatives, $C^k$ functions, Banach spaces, convolution.

While writing the present article, the author learned that Derek Hacon found in the 1970's an eversion bearing several similarities with the one presented here. This work is unpublished.

\vskip-1cm

\part{The principle}

The existence of an eversion in the set of $C^1$ immersions implies the existence in the $C^k$ class for any $k$, and even\footnote{For instance one can work with $S^2=$ the Euclidean sphere and convolve all immersions using a $C^\omega$ kernel on $S^2\times S^2$.
For a kernel sufficiently condensed near the diagonal, the convolved map remains an immersion.} in $C^\omega$.
Here we choose to work with $C^1$ for simplicity.

We will mention \emph{isotopy} between $C^1$ self diffeomorphisms of $\R^2$: these are continuous paths in the set of $C^1$ diffeomorphisms from $\R^2$ to $\R^2$, endowed with the topology of uniform convergence on compact subsets of the maps and of their derivatives of order $1$.

We will mention \emph{ambient isotopy} between two immersions or two embeddings $f_0$ and $f_1$ of a manifold $M$ into another $N$. These are pairs of isotopies $\phi_t$ of $N$ and $\psi_t$ of $M$, starting from the identity $\phi_0=\on{Id}_N$ and $\psi_0=\on{Id}_M$, such that $f_t=\phi_t\circ f_0\circ \psi_t^{-1}$. In particular $\phi_1$ carries the image of $f_0$ to the image of $f_1$.

\section{Immersed curves in the plane}\label{sec:imm}

We work with \emph{parameterized} curves: they are considered as functions from a parameterizing set to a space, and two curves with the same image and the same parameterizing set are not necessarily equal, even if orientation is preserved.

The reader may be aware that there is an invariant of homotopy for immersions $f$ of the circle in the plane: it is the winding number $n\in\Z$ of the tangent to the curve at $f(s)$ when the point $s$ moves along the oriented circle.
It is well-known that this is the only invariant (Whitney-Graustein theorem, \cite{Wh}): any two immersions with the same winding number are homotopic in the set of immersions.
The following stronger version is also known. We include a proof in the appendix.

\begin{lemma}\label{lem:a} Choose a base point on the circle. For all $n\in\Z$, let $\cal I_n$ be the set of $C^1$ immersions of the circle in $\R^2$ whose tangent vector has winding number $n$, and points horizontally at the base point, to the right.
Then $\cal I_n$ strongly deformation retracts\footnote{I.e.: For all $\alpha\in\cal I_n$ there exists a homotopy $h_t$ from $h_0=$ the identity on $\cal I_n$ to $h_1=$ the constant map with value $\alpha$, and such that $\forall t\in[0,1]$, $h_t(\alpha)=\alpha$.} to any of its points.
\end{lemma}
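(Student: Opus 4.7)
The plan is to parameterize $\cal I_n$ by triples $(p, r, \theta)$, where $p = f(0) \in \R^2$, $r(s) = |f'(s)| > 0$, and $\theta : [0,1] \to \R$ is the continuous lift of $\arg f'$ with $\theta(0) = 0$ (so $\theta(1) = 2\pi n$). An immersion is recovered by $f(s) = p + \int_0^s r(u)\, e^{i\theta(u)}\, du$, and $f(1) = f(0)$ becomes the closure constraint
\[
\int_0^1 r(s)\, e^{i\theta(s)}\, ds = 0.
\]
This identifies $\cal I_n$ with a codimension-$2$ subset of the convex set of all such triples.

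Given a target $\alpha \leftrightarrow (p_\alpha, r_\alpha, \theta_\alpha)$, I would contract $\cal I_n$ onto $\{\alpha\}$ by straight-line interpolation in these coordinates. For $f \leftrightarrow (p, r, \theta)$ and $t \in [0,1]$ put
\[
p_t = (1-t)p + tp_\alpha, \quad r_t = (1-t)r + tr_\alpha, \quad \theta_t = (1-t)\theta + t\theta_\alpha.
\]
Then $r_t > 0$ and $\theta_t$ still obeys the boundary conditions, so $g_t := r_t\, e^{i\theta_t}$ is a winding-$n$ velocity field. The residual $c_t := \int_0^1 g_t$ vanishes at $t = 0, 1$ (by closure of $f$ and $\alpha$) and vanishes identically along the trivial trajectory $f = \alpha$. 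To restore closure I subtract a correction $\Phi_t : [0,1] \to \C$ with $\Phi_t(0) = 0$, $\Phi_t(1) = c_t$, and $\Phi_t \equiv 0$ whenever $c_t \equiv 0$, and define
\[
f_t(s) = p_t + \int_0^s g_t(u)\, du - \Phi_t(s).
\]
Then $f_0 = f$, $f_1 = \alpha$, and $f_t \equiv \alpha$ along the constant trajectory; the membership $f_t \in \cal I_n$ amounts to $g_t(s) \neq \Phi_t'(s)$ for all $s$ (immersion property) and $g_t(0) - \Phi_t'(0) \in \R_{>0}$ (horizontal tangent at basepoint).

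The hard part will be the continuous selection of such a $\Phi_t$ as a function of $(f, t)$. The naive choice $\Phi_t(s) = s\, c_t$ fails whenever $g_t$ passes through the value $c_t$. Writing $\Phi_t = c_t\, \eta_t$, I would need $\eta_t' \in C^0([0,1], \C)$ to avoid the compact curve $s \mapsto g_t(s)/c_t$ pointwise and to integrate to $1$; the set of such $\eta_t'$ is open and nonempty, but only connected rather than convex, so the patching requires either a Michael-type selection after convexification or a partition of unity on the Banach manifold $\cal I_n$, carefully arranged so that the selected $\Phi_t$ degenerates to $0$ as $c_t \to 0$. A conceptually cleaner alternative that bypasses the explicit choice of $\Phi_t$ is to observe that the closure map $E : (r, \theta) \mapsto \int r\, e^{i\theta}$ is a smooth surjective submersion from the convex (hence contractible) space of all $(r, \theta)$'s onto $\R^2$, so $E^{-1}(0)$ is a contractible Banach submanifold; being an ANR, it strongly deformation retracts onto any of its points, and then so does $\cal I_n \cong \R^2 \times E^{-1}(0)$.
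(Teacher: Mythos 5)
Your setup is essentially the paper's: writing $f'(s)=r(s)e^{i\theta(s)}$ and interpolating linearly in $(r,\theta)$ is the same as the paper's linear interpolation of $\log\gamma'$ (up to interpolating $r$ versus $\log r$, which is immaterial), and both run into the same obstacle, namely that the interpolated curve no longer closes up. The problem is that you stop exactly at the hard point. The paper closes the curve with Whitney's trick: set $a_t=\int_0^1 g_t\big/\int_0^1|g_t|$ and replace the velocity by $g_t(s)-a_t|g_t(s)|$. In your notation this is the explicit choice $\Phi_t(s)=a_t\int_0^s|g_t(u)|\,du$; it closes the curve by construction, it vanishes identically when $c_t\equiv 0$ (so the retraction is strong), and the immersion condition $g_t(s)\neq\Phi_t'(s)$ reduces to $e^{i\theta_t(s)}\neq a_t$, which holds because $|a_t|<1$ whenever $n\neq 0$ (strict triangle inequality, since $e^{i\theta_t}$ is nonconstant). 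No selection theorem is needed. Your remark that the admissible set of corrections is ``connected rather than convex'' is precisely why a Michael-type selection does not apply as stated, and you do not explain how to convexify or how to force the selection to degenerate continuously to $0$ as $c_t\to 0$; as written, that route is not a proof.

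The ``conceptually cleaner alternative'' also has a genuine gap: a surjective submersion of Banach manifolds is not a fibration in the absence of properness or an explicit trivialization, so contractibility of the convex total space of $(r,\theta)$'s tells you nothing about the fiber $E^{-1}(0)$ without further argument. (Constructing the trivialization of $E$ over $\C$ is, for $n\neq 0$, essentially the content of Whitney's trick, so nothing is saved.) A warning sign is that your soft argument is insensitive to $n$, whereas the explicit method genuinely fails for $n=0$ (if $\gamma=\overline\alpha$ and $t=1/2$ one can have $|a_t|=1$); the case $n=0$ requires a different device (a corrugation, or the argument the paper cites from the literature). Since the eversion only uses $n=\pm1$, the fix for your proposal is simply to insert the Whitney correction; without it, the proof is incomplete.
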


The set $\cal I_n$ defined above is an open subset of the Banach space $C^1(S^1,\R^2)$, where the norm is $\max(\sup\|f\|,\sup\|f'\|)$. The lemma refers to this topology. 

\section{Height preserving sphere immersions}\label{sec:tra}

Let $(x,y,z)\in\R^3$.
Let us call horizontals the planes $z=\ $constant. The coordinate $z$ is called the \emph{height} and if $P=(x,y,z)$ we denote $\height(P)=z$.
As a model of the sphere $S^2$ we consider a Euclidean sphere in $\R^3$, seen as a $C^1$ (sub)manifold.
Let us say that an immersion $f:S^2\to\R^3$ is height preserving if $\forall P\in S^2$, $\height(f(P))=\height(P)$.

Then the function $\height\circ f$ has only two critical points on $S^2$: the north and the south poles. Let us call $z_{\max}$ and $z_{\min}$ their heights. Between, each horizontal plane cuts the immersed sphere in a $C^1$ immersed circle. Indeed, the intersection of $f(S^2)$ with the horizontal plane at height $z$  is the image by $f$ of a Euclidean circle: the points of $S^2$ at height $z$. 
Projecting this curve to $\R^2$ by $(x,y,z)\mapsto (x,y)$, we get an immersed $C^1$ curve in the plane that moves continuously with the height (and even better, since the map $f$ is $C^1$ as a map of two variables\footnote{It is however not true that the map $z\mapsto$ the curve is $C^1$ as a function of an interval to the Banach space of $C^1$ functions from $S^1$ to $\R^2$: this would require the existence of a crossed derivative.}).
In neighborhoods of $z_{\min}$ and $z_{\max}$, $f$ must be an embedding and the horizontal slices are $C^1$ embedded circles that shrink down to a point. Let us call \emph{caps} such parts $f(S^2)\cap``z<z_{\min}+\epsilon"$ and $f(S^2)\cap``z>z_{\max}-\epsilon"$.

Lemma~\ref{lem:a} is the main tool in the proof of:

\begin{proposition}\label{prop:d}Let $S^2\subset\R^3$ be a Euclidean sphere. Let $\cal I_H$ denote the set of height preserving $C^1$ immersions of $S^2$ in $\R^3$. Any element of $\cal I_H$ is homotopic in $\cal I_H$ to an embedding. 
\end{proposition}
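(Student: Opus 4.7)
The plan is to homotope $f$ through $\cal I_H$ slice-by-slice to a chosen model embedding, using the contractibility of each $\cal I_n$ provided by Lemma~\ref{lem:a}. For any $h\in\cal I_H$, the tangent winding number of the horizontally projected slice at height $z\in\zmm$ is an integer that varies continuously in $z$, hence is constant; evaluated near a cap (where $h$ is embedded and slices are round-like embedded circles) it equals $\pm 1$, the sign being determined by $h$. Fix an embedded sphere $g\in\cal I_H$ with the same height range $[z_{\min},z_{\max}]$ as $f$ and the same winding-number sign $n$, e.g.\ a round Euclidean sphere possibly reflected across a vertical plane.

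First I would apply a height-preserving ambient isotopy of $\R^3$ supported in small neighborhoods of the two caps so that $f$ coincides with $g$ on $\height^{-1}([z_{\min},z_{\min}+\epsilon])\cup\height^{-1}([z_{\max}-\epsilon,z_{\max}])$; this is possible because the caps are $C^1$ embedded disks foliated by horizontal embedded circles shrinking to a point. Then a further height-preserving ambient isotopy, given by a continuous family of orientation-preserving rigid motions $T_z$ of $\R^2$ acting on each horizontal plane (identity near the caps), normalizes each middle slice $T_z\circ f|_{C_z}$ to land in $\cal I_n$; performing the analogous normalization for $g$, we reduce to the case where $z\mapsto\gamma_z^f$ and $z\mapsto\gamma_z^g$ are two continuous paths $[z_{\min}+\epsilon,z_{\max}-\epsilon]\to\cal I_n$ agreeing at their endpoints.

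By Lemma~\ref{lem:a}, $\cal I_n$ is contractible, so the space of paths in $\cal I_n$ with prescribed endpoints is contractible and path connected; this produces a rel-endpoint homotopy $(z,t)\mapsto H(z,t)\in\cal I_n$ between $z\mapsto\gamma_z^f$ and $z\mapsto\gamma_z^g$. Reassembling $H(z,t)$ as the height-$z$ slice of an immersion $f_t$ and keeping the matched caps fixed yields a homotopy $f_t\in\cal I_H$ from $f_0=f$ to $f_1=g$, an embedding.

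I expect the main obstacle to be constructing $H$ explicitly enough that the assembled map $f_t:S^2\to\R^3$ is genuinely $C^1$ in both variables. As the footnote in Section~\ref{sec:tra} points out, joint $C^1$ regularity of $f$ is strictly weaker than $C^1$ regularity of $z\mapsto\gamma_z$ as a Banach-valued map, yet each $f_t$ must be $C^1$ in two variables. A natural approach is to build $H$ from the strong deformation retract $h_t$ of Lemma~\ref{lem:a} by a filling-the-disk argument applied to the loop $\gamma_z^f\cdot(\gamma_z^g)^{-1}$, and to control the joint regularity of the result using the technical lemmas of the sort promised in Section~5.
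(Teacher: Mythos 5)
Your overall strategy is the paper's: normalize the horizontal slices into $\cal I_n$, use Lemma~\ref{lem:a} to homotope the whole one-parameter family of slice curves at once, repair joint $C^1$ regularity afterwards, and deal with the caps separately. The differences are in execution, and one of them hides a genuine gap. The paper does \emph{not} try to freeze the caps: it cuts them off at heights $z_0<z_1$, deforms only the cylinder part, and then recovers the caps via the lifting property of Lemma~\ref{lem:recap}, in which the caps \emph{move} with the homotopy. Your plan of matching $f$ to $g$ near the poles and then ``keeping the matched caps fixed'' fails at the seam circles: at an intermediate time $t$ the one-sided derivative along the meridians coming from the frozen cap and the one coming from the deforming collar will not agree, so the assembled map of $S^2$ is not $C^1$ there (this is exactly the defect the interpolation $\on{lin}(\wt f(P),\cdot,\chi(\cdot))$ in the proof of Lemma~\ref{lem:recap} is built to cure). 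Moreover, the regularization you correctly identify as necessary (the paper's Lemma~\ref{lem:c}, obtained by convolution) perturbs the boundary curves, so after that step they are no longer literally the slices of $g$ and the caps could not stay fixed even if the seam derivatives matched; you are forced back to a cap-following (lifting) argument. Finally, the preliminary height-preserving ambient isotopy making $f$ coincide with $g$ on whole cap neighborhoods, including at the degenerating slices near the poles, is asserted rather than constructed; the paper's architecture avoids ever needing it.

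Two smaller points of comparison. First, rigid motions $T_z$ do suffice to put each slice into $\cal I_n$, but the paper goes further and normalizes the two boundary curves to be the \emph{same} curve $\alpha$, chosen as the fixed point of the strong deformation retraction of Lemma~\ref{lem:a}; then applying the retraction slicewise automatically fixes both boundary curves, so they stay embedded for all $t$ and the homotopy is completely explicit. Your version instead invokes contractibility of the path space rel two \emph{distinct} endpoints, which is correct but cannot be read off from a single s.d.\ retraction to one point; you need the standard (slightly less direct) filling argument you allude to. Second, your target is a prescribed model embedding $g$, whereas the paper only needs to land on \emph{some} embedding (the image of the constant family $\alpha$ pulled back through the normalizing diffeomorphisms); the weaker target is all the statement requires and simplifies the endgame.
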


See the appendix for a proof.
Since we stay in $\cal I_H$, the orientation induced by $S^2$ on the caps is preserved during the homotopy.

\medskip

Conversely if a family of $C^1$ immersions $\gamma_z: S^1 \to \R^2$ with $[z\in[z_0,z_1]$ is given, such that the map $(s,z)\in S^1\times[z_0,z_1] \to \gamma_t(s)$ is $C^1$ then we can look at its \emph{trace}, defined as follows. Let $\cal{G}: (s,z)\in S^1\times[z_0,z_1] \mapsto (\gamma_z(s),z)\in\R^2\times\R \simeq \R^3$. 
This is a height preserving immersion of the cylinder in $\R^3$.
If the top and the bottom curve are embeddings of $S^1$, then we can close the immersion with $C^1$ caps to get a height preserving immersion of a sphere (see Lemma~\ref{lem:recap}).
Let us call this the \emph{capped trace}.

\section{A basic move for curves}

Here we look at particular immersions of the interval and the square instead of the circle and the sphere.
They are subsets of the objects we will encounter below so it is worth focusing on them.
Let us begin with a horizontal segment in the plane. It can be deformed, with both ends being fixed and with the derivative at the ends remaining constant, into a horizontal curve with two loops added, one above and one below, like on Figure~\ref{fig:basicmove}.
Existence of such a move follows from similar methods as in the proof of Lemma~\ref{lem:a} but it is not too hard to find explicit ways of doing it, especially if we want the trace in $\R^3$ to be simple. In the $C^1$ category, we may work with Bezier curves, for instance.

\begin{figure}[htbp]
\scalebox{0.6}{
\begin{tikzpicture}
\node at (0,0) {\includegraphics[bb=0 100 400 300,scale=0.4]{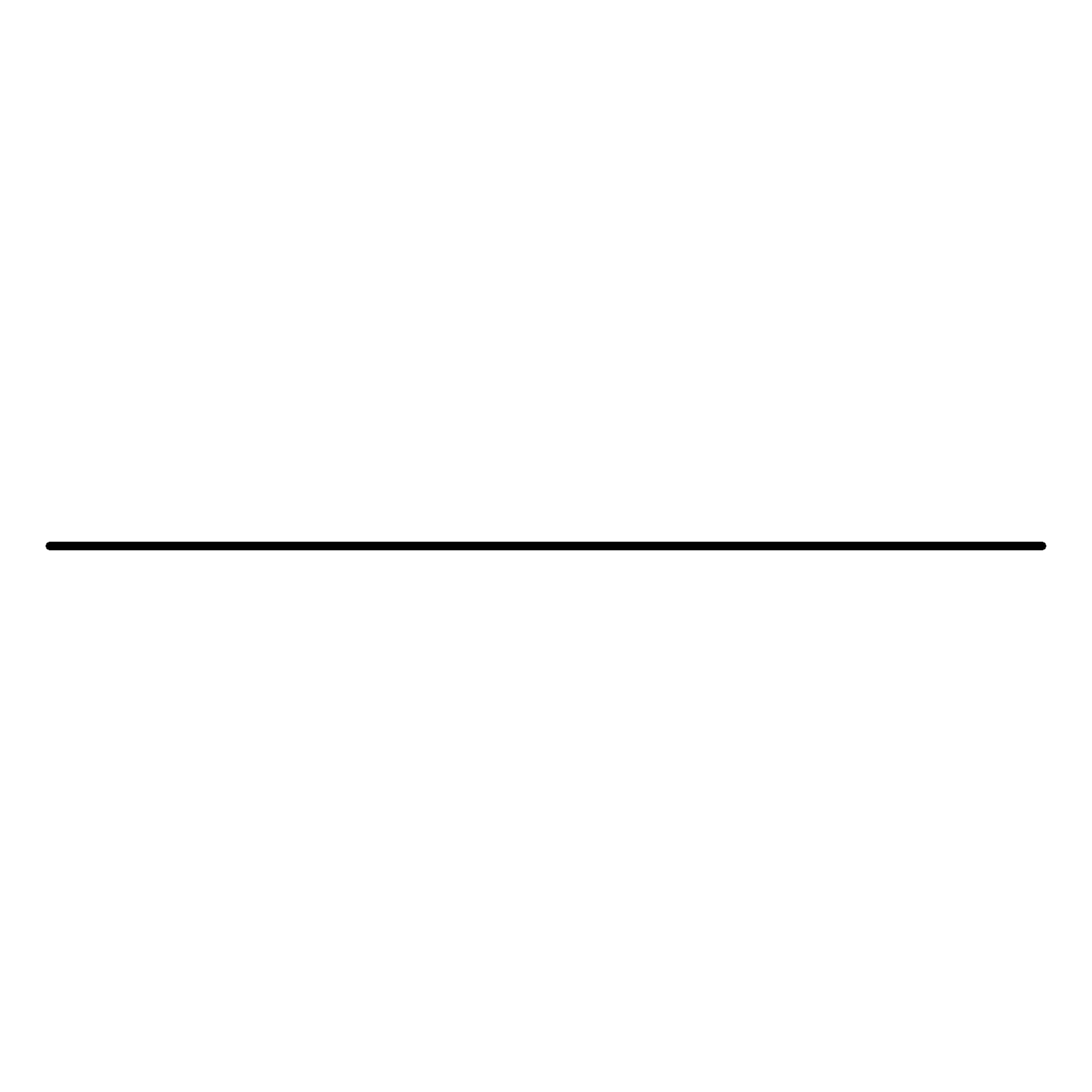}};
\node at (6,0) {\includegraphics[bb=0 100 400 300,scale=0.4]{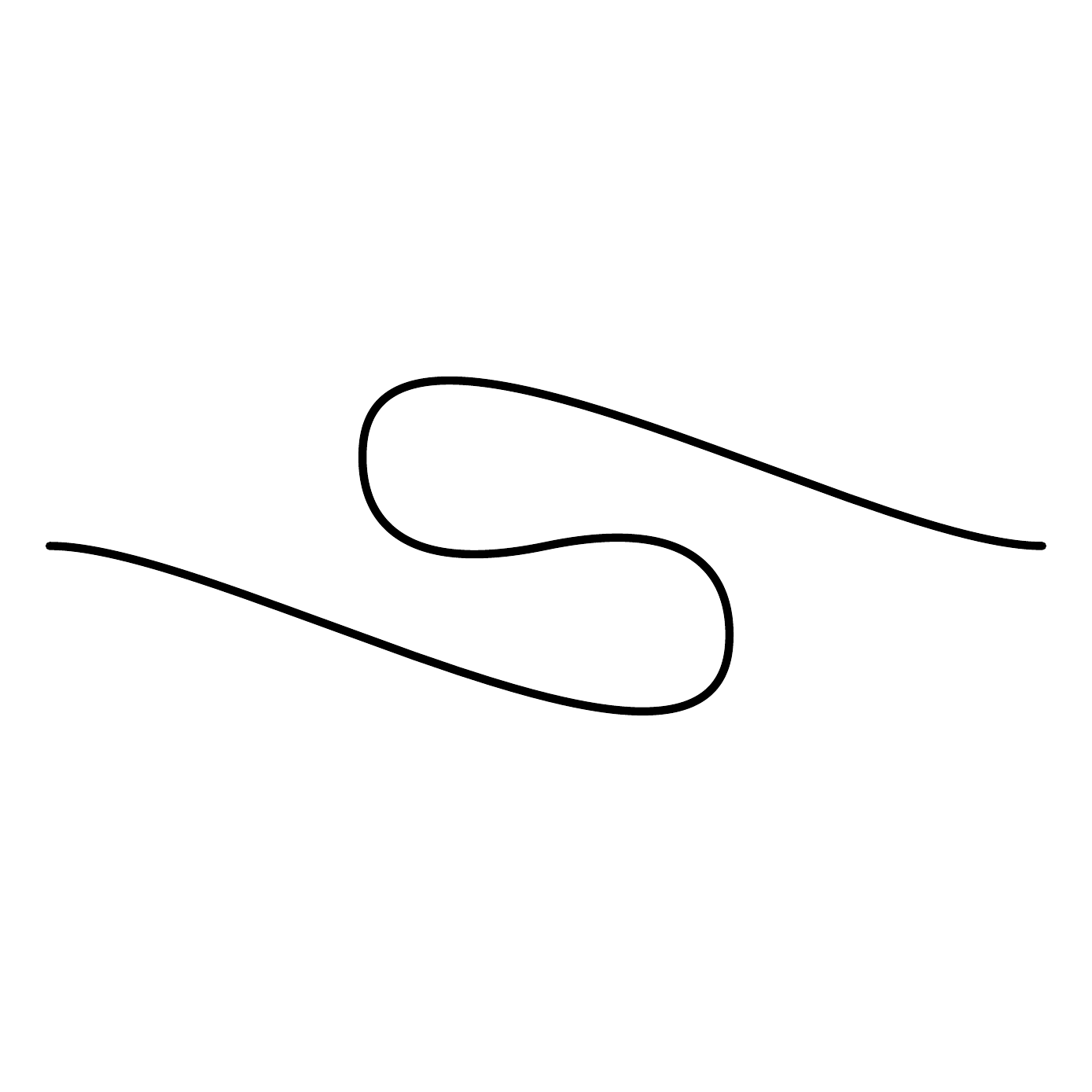}};
\node at (0,-2.2) {\includegraphics[bb=0 100 400 300,scale=0.4]{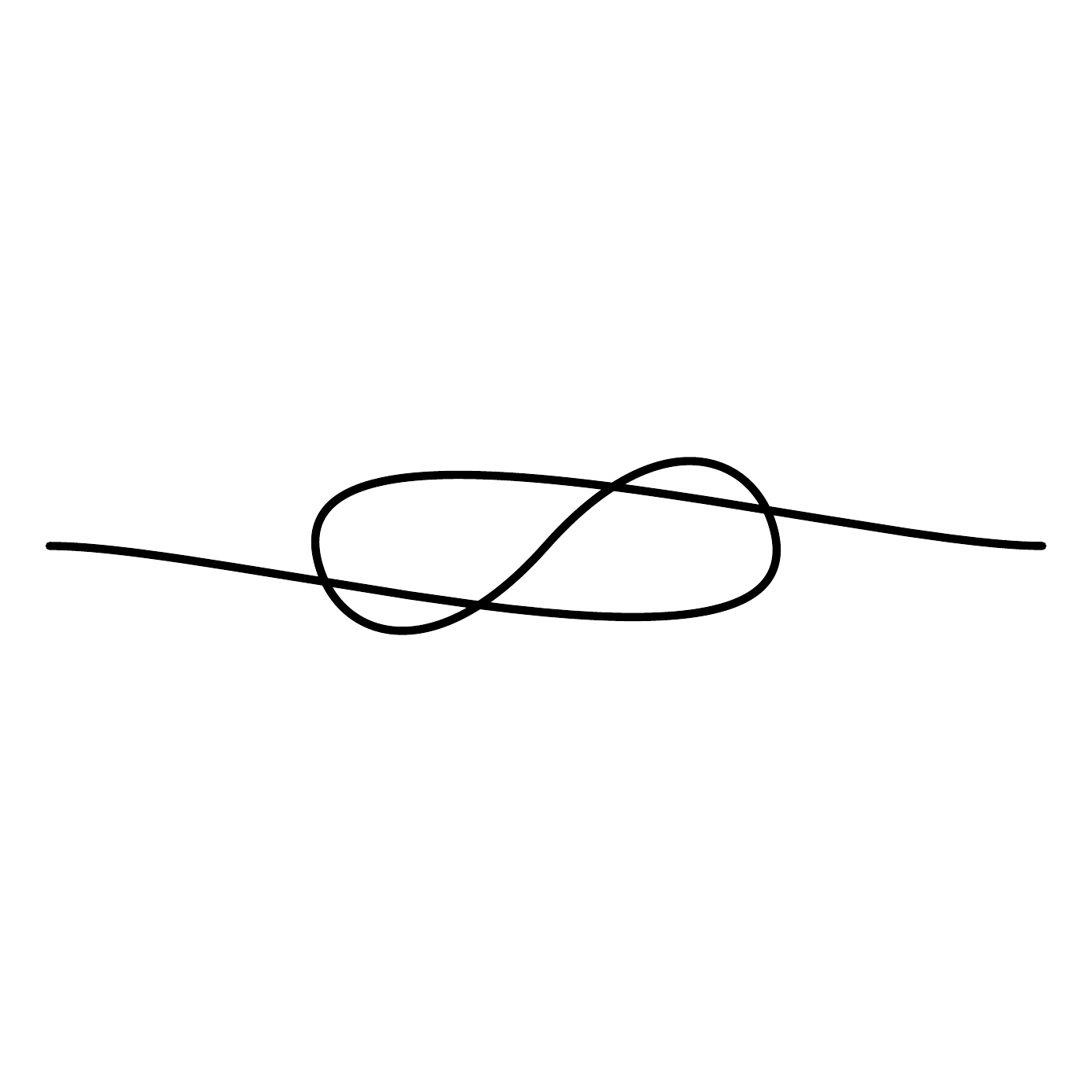}};
\node at (6,-2.2) {\includegraphics[bb=0 100 400 300,scale=0.4]{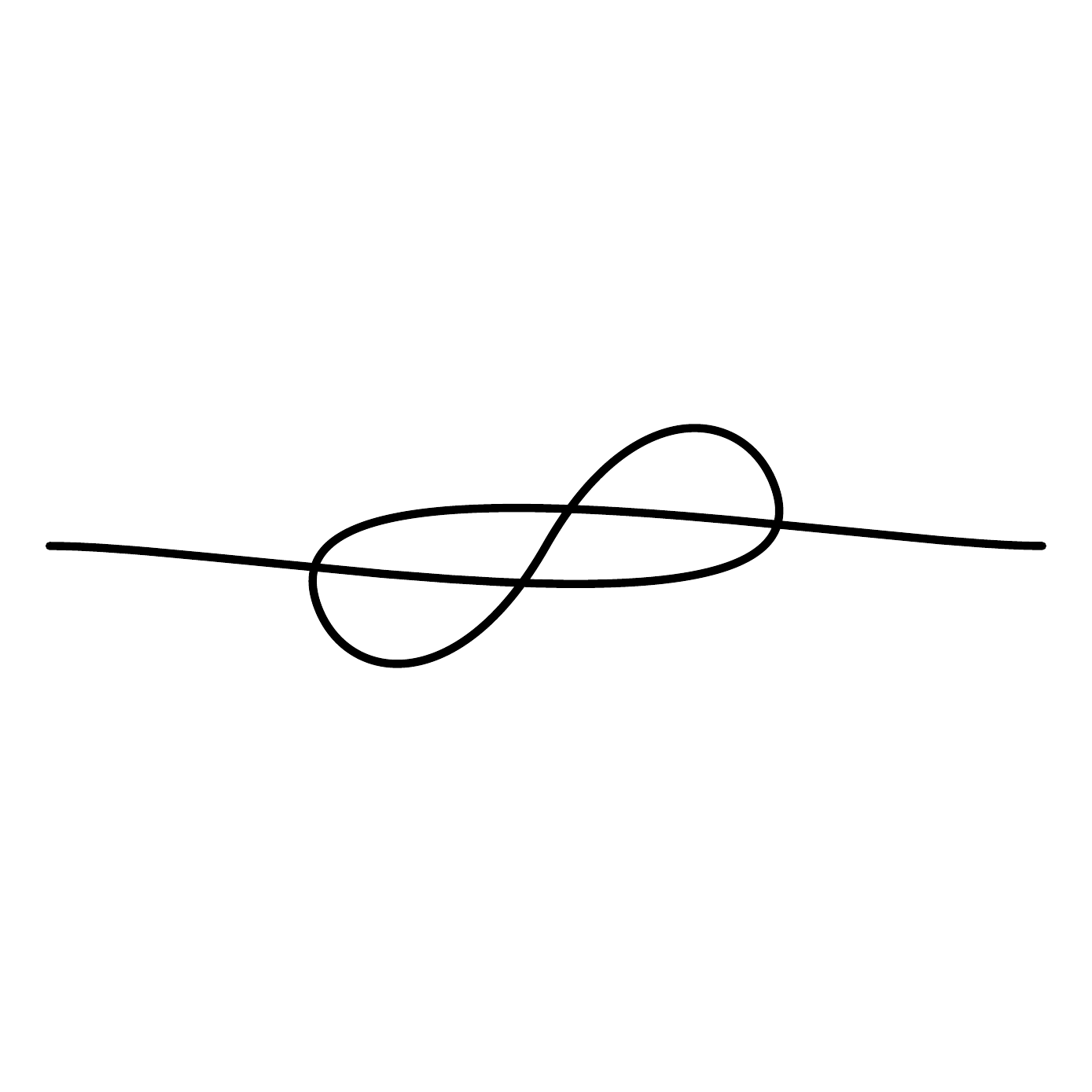}};
\node at (0,-4.5) {\includegraphics[bb=0 100 400 300,scale=0.4]{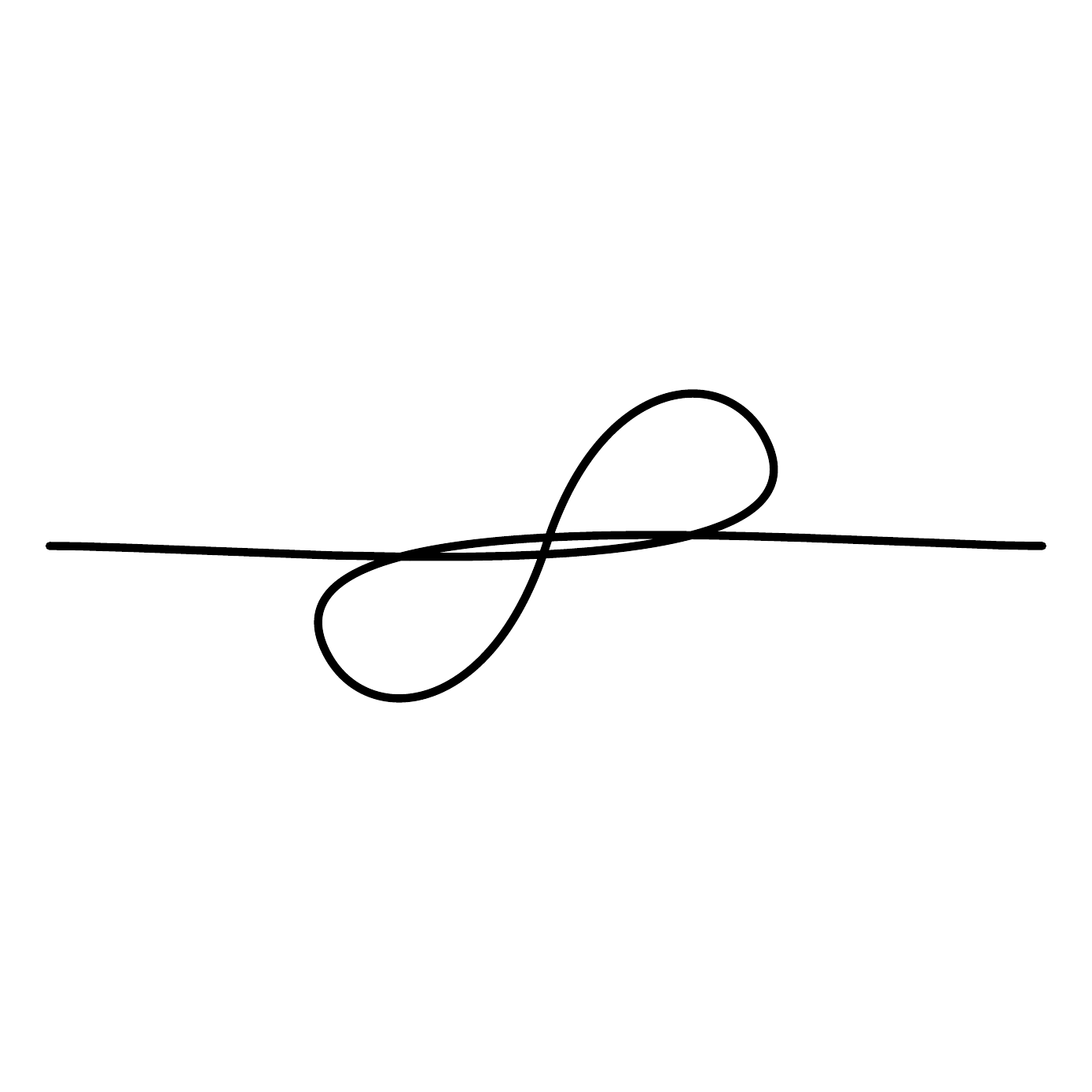}};
\node at (6,-4.5) {\includegraphics[bb=0 100 400 300,scale=0.4]{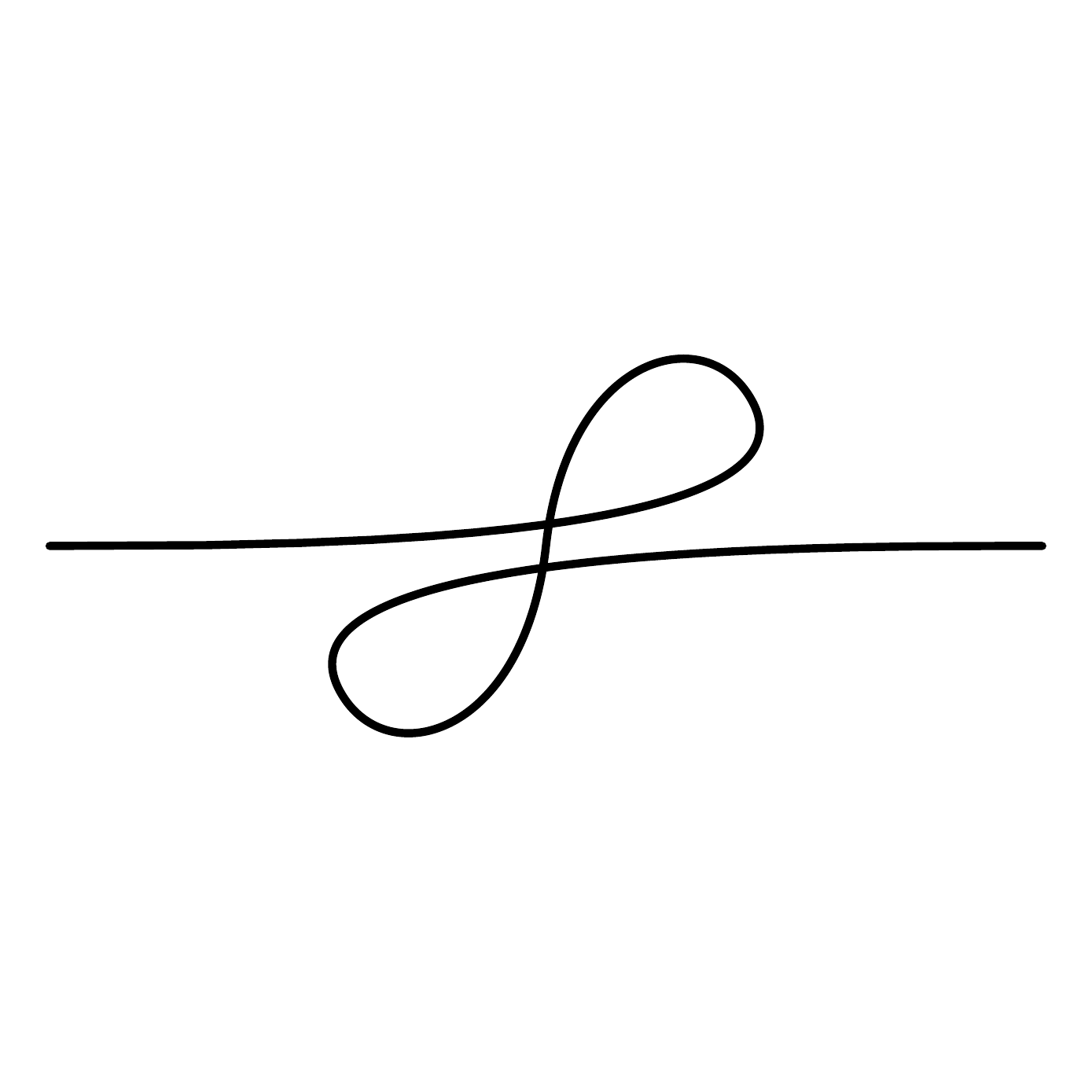}};
\node at (0,-7.7) {\includegraphics[bb=0 100 400 300,scale=0.4]{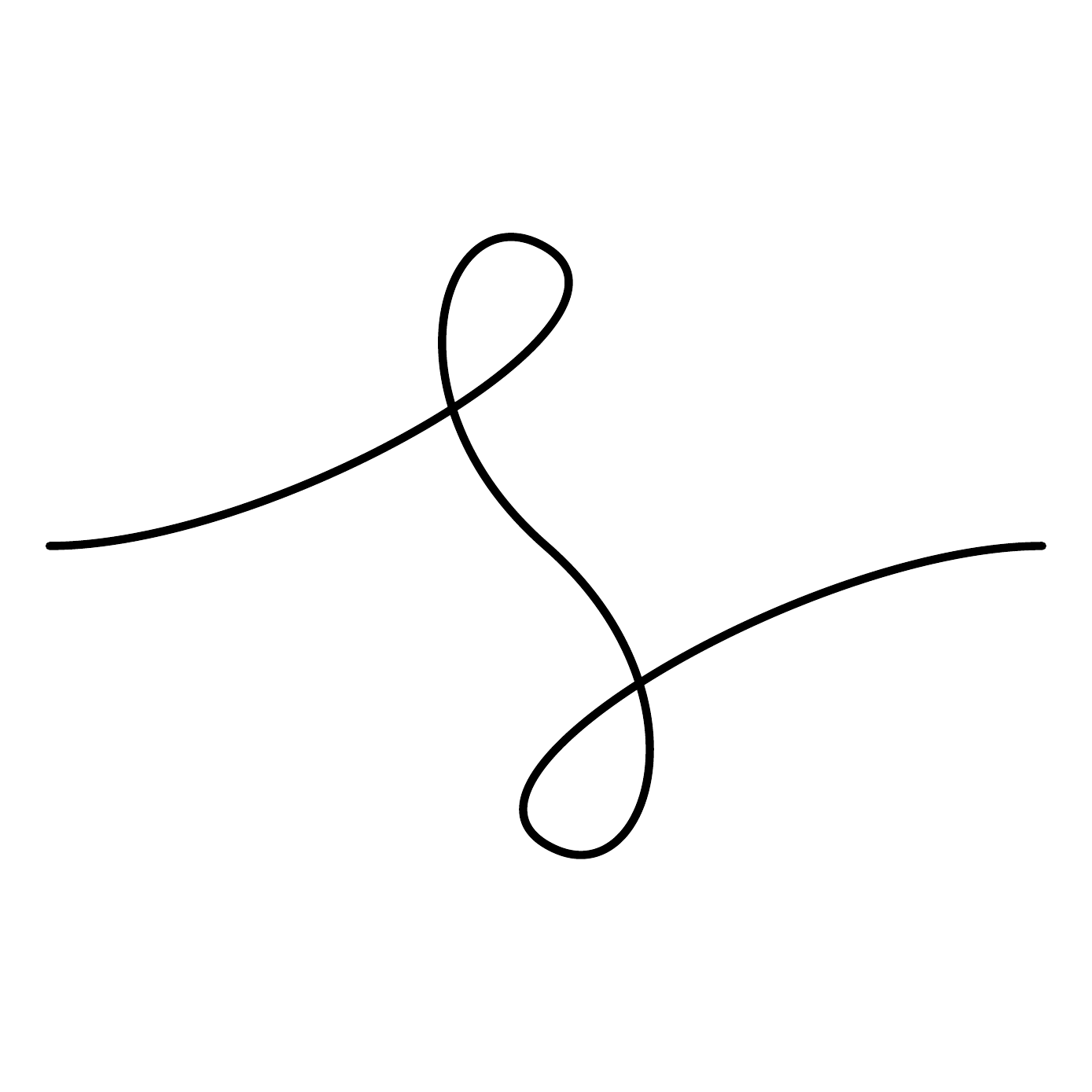}};
\node at (6,-7.7) {\includegraphics[bb=0 100 400 300,scale=0.4]{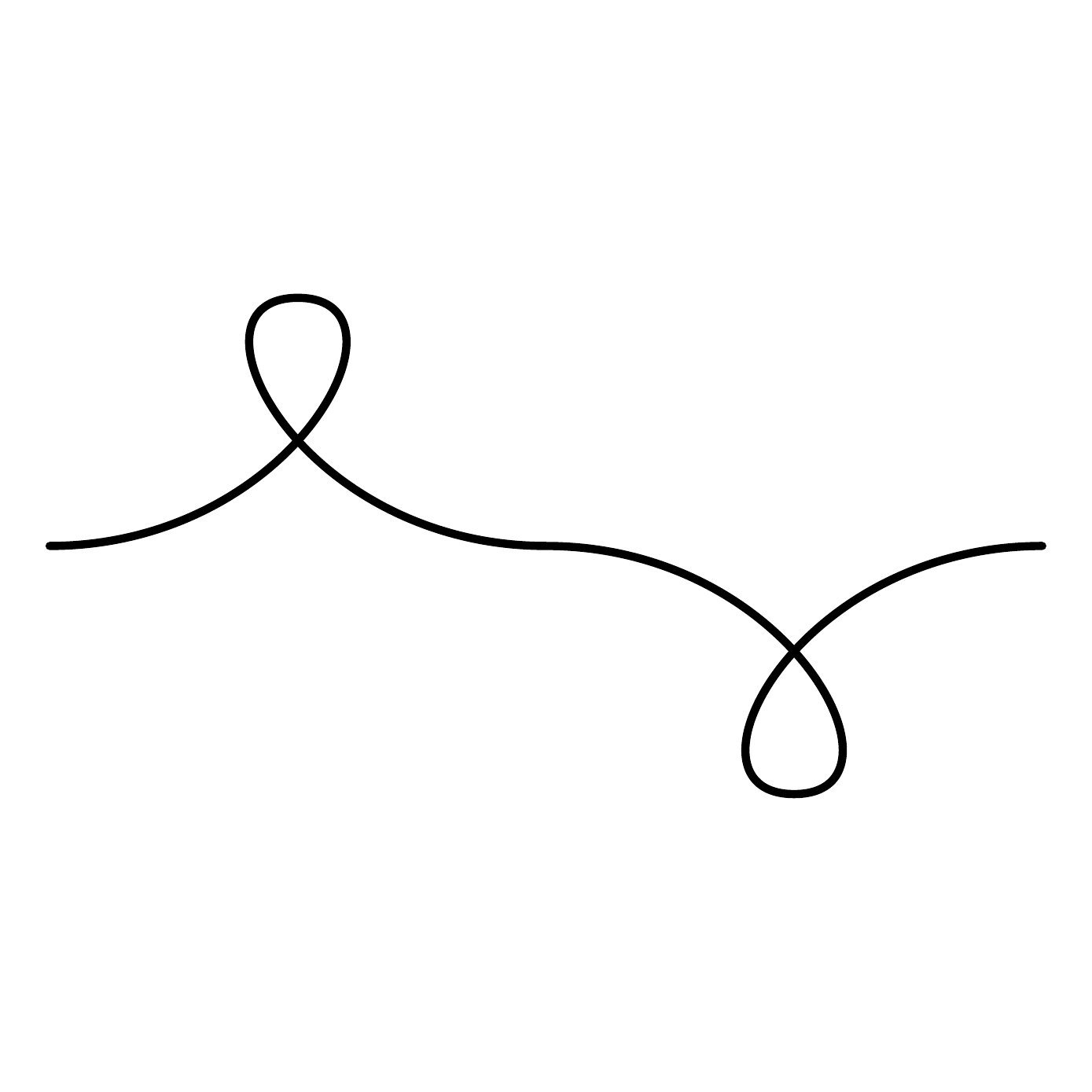}};
\end{tikzpicture}
}
\caption{Creation of a pair of opposite loops on a segment, by deformations fixing the endpoints and their tangents.}
\label{fig:basicmove}
\end{figure}

If the basic move is executed well, the trace has a self-intersection that is a curve shaped like a ribbon tie: there is a triple point from which stem two loops and two longer and open curves. See Figure~\ref{fig:basicsurf}.

\begin{figure}[htbp]
\includegraphics[height=9cm]{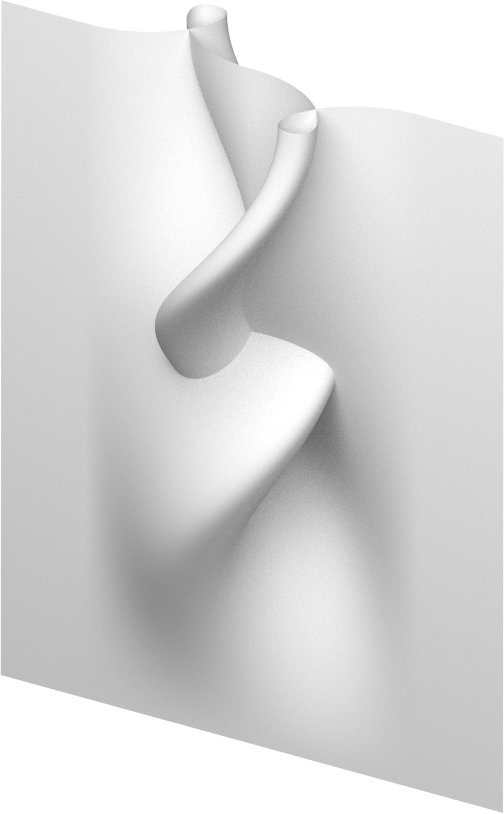}
\quad
\includegraphics[height=9cm]{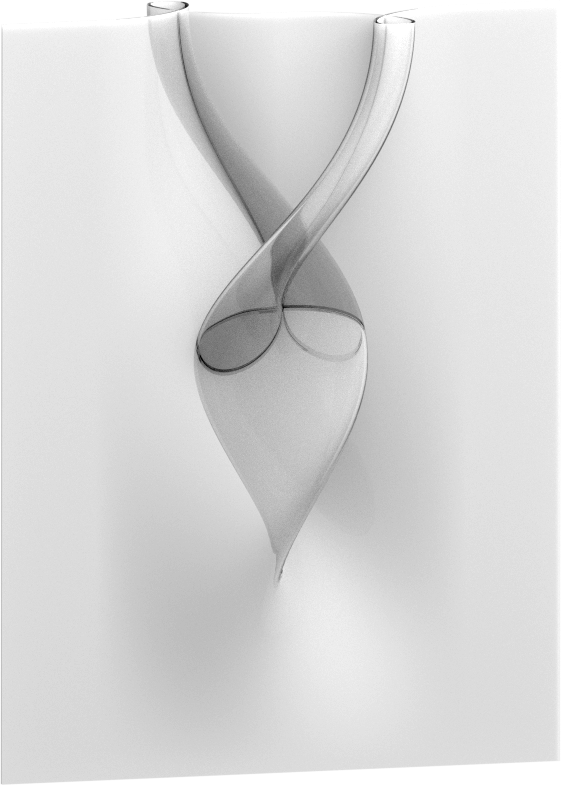}
\includegraphics[width=12cm]{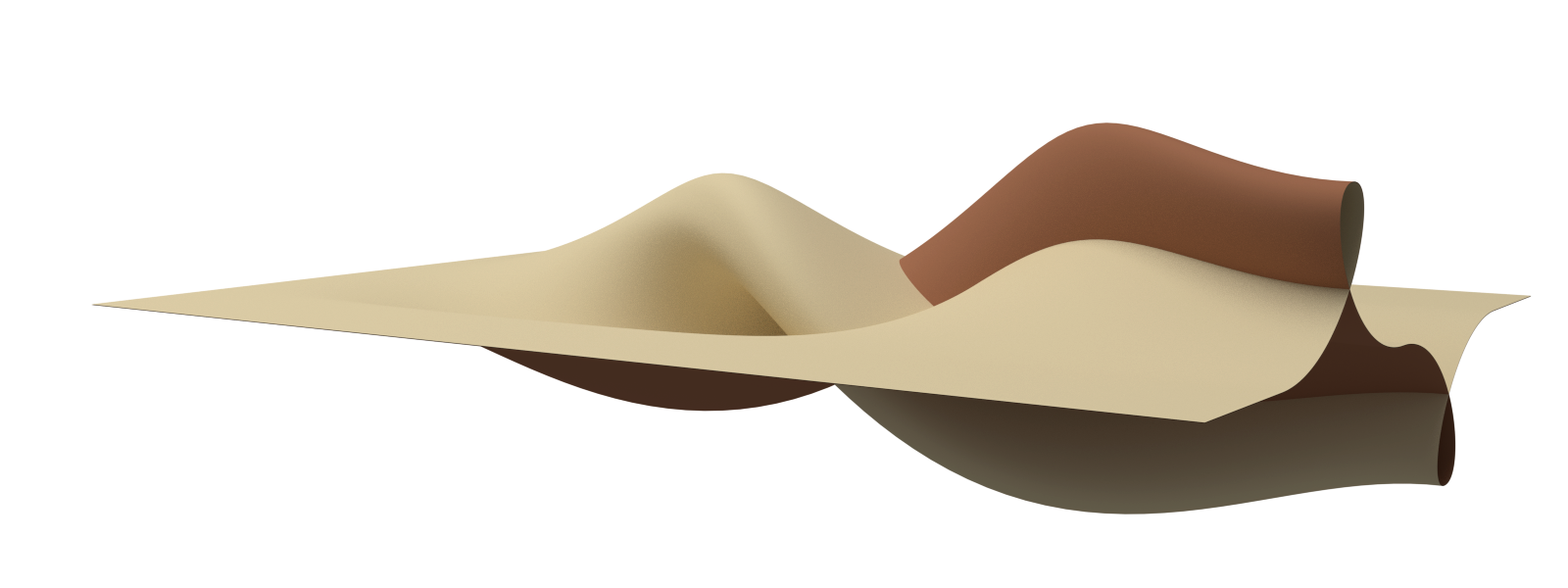}
\caption{The trace of a basic move. Above, left: The surface is represented white and opaque. Above, right: it is now semi-transparent so that one can see the hidden parts. The self-intersection curve is apparent. Below: the object is now presented horizontally and viewed from the side; this time the two faces of the surface have been given a different color.}
\label{fig:basicsurf}
\end{figure}

Somebody starting to try deforming immersions, creating more and more self intersection, will likely and rapidly end up with pieces diffeomorphic to this immersed surface patch.

Note that in our illustrations in Figures~\ref{fig:basicmove} and~\ref{fig:basicsurf} we chose to keep a central symmetry on the curves, which forces a tangency at the crossing of the curves at the moment of the triple point. 
However, the trace generated by the move on Figure~\ref{fig:basicmove} is robust as a $C^1$ embedding of the square in $\R^3$, thus any perturbation of the basic move has a diffeomorphic trace.
Hence there are several different\footnote{By different we mean: not diffeomorphic by a diffeomorphism preserving the $z$ coordinate.} ways to execute the move so that we still get a diffeomorphic trace, like on Figure~\ref{fig:basic-more}. Any such move will be called a \emph{basic move}. 
 
\clearpage

\begin{figure}[htbp]
\begin{tikzpicture}
\node at (0,0) {\includegraphics[scale=1.6,angle=0]{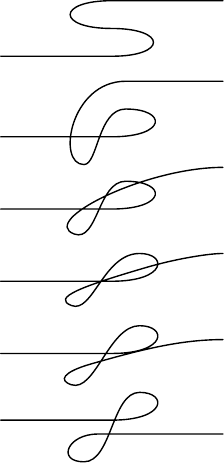}};
\node at (4,0) {\includegraphics[scale=1.6,angle=0]{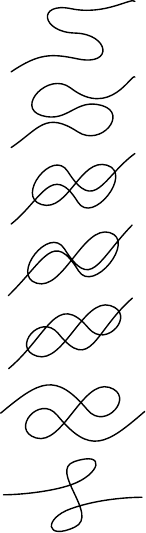}};
\end{tikzpicture}
\caption{Two other examples of realizations of a basic move. The trace is the same as in Figure~\ref{fig:basicmove} up to a diffeomorphism. In terms of a moving curve, the version on the left is more generic than the other two.}
\label{fig:basic-more}
\end{figure}

\section{Everting the sphere}

Let $S^2\in\R^3$ be a Euclidean sphere.

The eversion presented here is a three step process, starting from an embedding of the sphere $S^2$ in $\R^3$, ending with an embedding reversing orientation, and with two key positions $\cal S_1 =f_1(S^2)$ and $\cal S_2 =f_2(S^2)$ in between, where $f_1$ and $f_2$ are height preserving.

In steps 1 and 3, the deformation will stay in the space $\cal I_H$ height preserving immersions and will be provided by Proposition~\ref{prop:d}.
Step 2 will have to give up the height preserving property. On the bright side, it will only deform $f_1$ into $f_1$ by an ambient isotopy.
In particular the two key positions $\cal S_1$ and $\cal S_2$ are diffeomorphic.

\medskip 

\textbf{Note:} it is impossible to evert the sphere while only staying in height preserving immersions. Indeed, we remarked in Section~\ref{sec:tra} that, as one moves in $\cal I_H$, the orientation of each cap has to be preserved.

\medskip

To fix ideas and for reference, let us call: \emph{right} the $x>0$ direction, \emph{back} the $y>0$ direction, and \emph{up} the $z>0$ direction. Opposites are called the left, front and down directions.

Since the immersions in the two key positions are height preserving, each can be described as a capped trace (see Section~\ref{sec:tra}) of a family of $C^1$ immersions $\gamma_z$ of the circle in the plane such that $(s,z)\mapsto \gamma_z(s)$ is $C^1$ too. 

The family of curves associated to $\cal S_1$ is described as follows: a circle first undergoes a basic move as in the previous section on a small portion, let us say a small arc on the front side. Then the left loop just created is headed towards the inside of the circle and the right loop towards the outside.
In the next part of the motion we slide the outside loop along the circle until it comes back close to the inside loop, but now is on its left. We then do the basic move again, but backwards in time and with the left and right inverted. This ends with a circle.

Let us now describe the family associated to $\cal S_2$. It starts with a circle with orientation reversed, i.e.\ it is parameterized backwards by $S^1$.
The left and right side of initial circle first first pinch and cross: this gives a circle with two external loops, one on the front side and one on the back side. The back side loop will be left unchanged. On the front, we do a basic move next to the loop, on its right. After this movement, we see on the front a inside loop surrounded by two outside loops. We let the left and central loops do another basic move, backwards in time, with left and right inverted. We are left again with a circle with two outside loops, one on the front and one on the back. We uncross this exactly as the beginning of the family, but with time inverted.

See Figure~\ref{fig:S1S2}.
We leave it to the reader to check that, as well for $\cal S_1$ as for $\cal S_2$, the family can be taken such that $(s,z)\mapsto \gamma_z(s)$ is $C^1$. Continuity of $\partial \gamma/\partial s$ as a function of $(s,z)$ is the easiest to achieve. Concerning $\partial \gamma/\partial z$, let us suggest a hint (this is of course not the only way to proceed): the motion are defined by parts $z\in[z_k,z_{k+1}]$; a good way to ensure that $\partial \gamma/\partial z$ is continuous across horizontal seams $z=z_k$ is to slow down the motion, i.e.\ replacing $\gamma_z$ by $\gamma_{f(z)}$ for some $f$, so that  $\partial \gamma/\partial z$ vanishes entirely at $z=z_k$.  No slow-down should occur at the triple point, for it would prevent transversality (see the next paragraph).

We want the self intersections of each $\cal S_i$ to be transverse, including at the triple point: i.e.\ that the $2$ or $3$ tangent planes at a given self intersection point are in a transverse position. Then the immersions $f_i:S^2\to \cal S_i$ are \emph{robust}: every $C^1$-close perturbation is an immersion that is equivalent to $f_i$ by an ambient isotopy.\footnote{Recall that our definition of ambiant isotopy includes reparameterization of $S^2$ by an isotopy, c.f.\ just before Section~\ref{sec:imm}.}${}^,$\footnote{The reparameterization in the isotopy typically cannot be taken height preserving.}

\begin{figure}[htbp]
\includegraphics[scale=0.9]{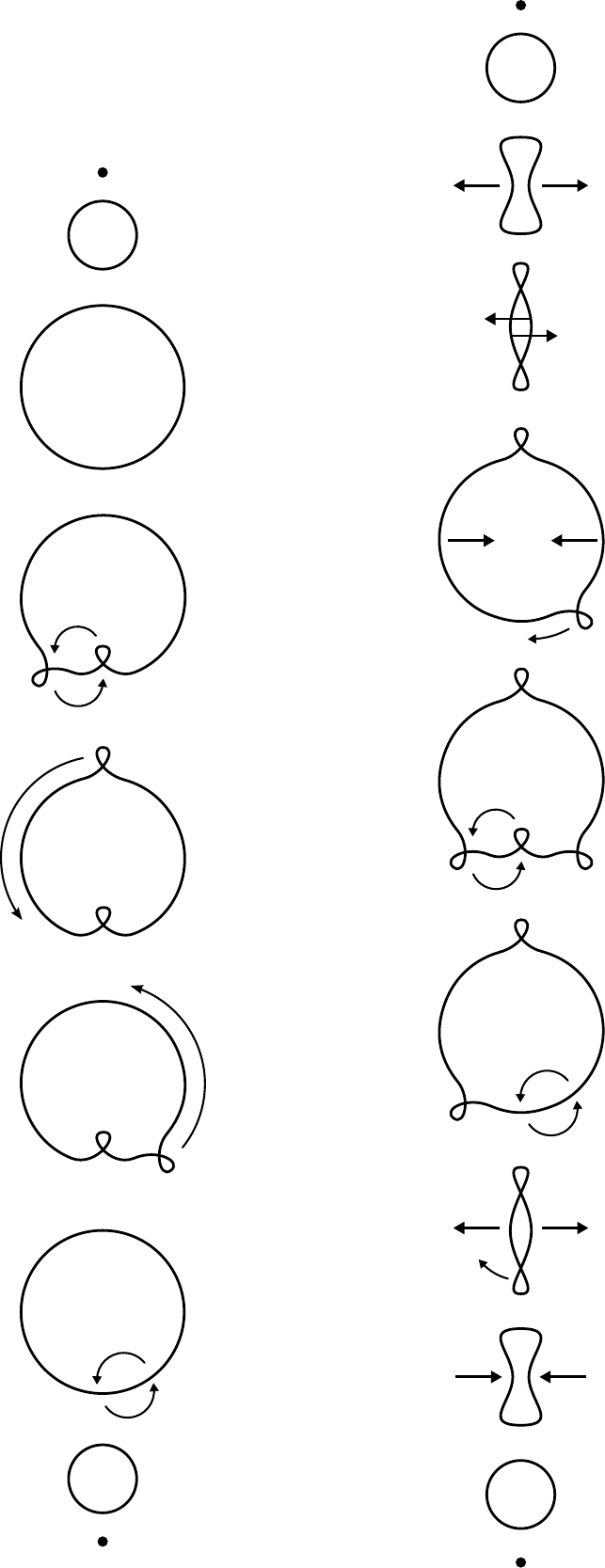}
\caption{The two moving curves whose traces define $\cal S_1$ (left) and $\cal S_2$ (right). Each sequence shall be read from bottom to top.}
\label{fig:S1S2}
\end{figure}

Let us call \emph{tubes} the traces of the loops: these are subsets of the surfaces. To be more precise, we include in the tube a whole connected component of the image of the immersion minus the self-intersection curve. Let us say that the tube is external if it includes an external loop.
Now observe that both $\cal S_1$ and $\cal S_2$ have only one external tube. Indeed, this is trivial for $\cal S_1$ and the tube containing the external loop on the back of $\cal S_2$ runs until the top, where it turns back down and joins the front external loop that is on the right of the internal loop. They then run in parallel up to the point, near the bottom, where they merge thanks to the basic move. Similarly ($\cal S_2$ is invariant by a rotation of $180\degree$ around the $y$-axis, which points front-to-back), the back external tube runs to the bottom, turns back up and joins the front external loop on the left of the internal loop; they run together and merge near the top with a basic move. 

It is now more or less clear that we can deform the external tube of $\cal S_1$ into the external tube of $\cal S_2$:  turning its part running on the back side by a little bit less than $180\degree$ (anticlockwise from the observers's viewpoint, clockwise if we prefer to look at it directly from the back side). While doing so, the part of the external tube that lies on the right has to be pulled up to the top of the sphere. Similarly, the left part is pushed down to the bottom.

Note that the bottom-most and top-most points of $\cal S_2$ lie on its external tube. Hence the caps are subsets of this tube. On this tube, the inside of the initial sphere is now outside. Hence the sides of $\cal S_1$ and $\cal S_2$ that point outwards at the caps are opposite sides of the immersed sphere if we match them by following the ambient isotopy from $\cal S_1$ to $\cal S_2$. See Figures~\ref{fig:sides} and~\ref{fig:S1S23D}. The height preserving unfoldings of $\cal S_1$ and $\cal S_2$ into embedded spheres, given by Proposition~\ref{prop:d}, both preserve the orientation of the caps.
This means that following the three Steps~1 then~2 then~3 everts the sphere.

\begin{figure}[htbp]
\includegraphics[scale=1.2]{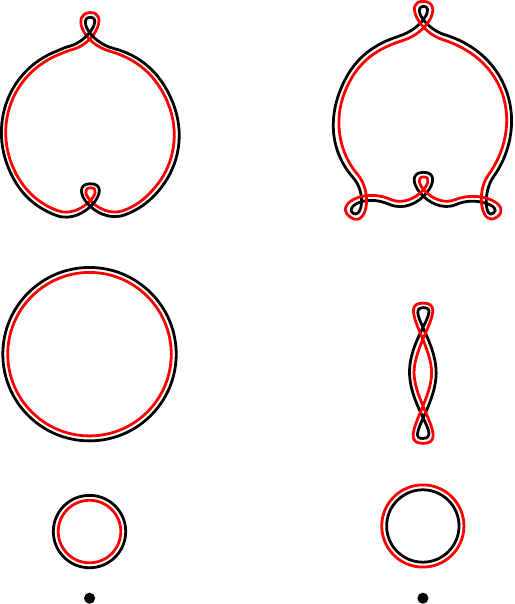}
\caption{Here are some of the curves that appeared in the bottom part of the previous figure. A parallel and very close red curve indicates the inside of the initial sphere; it can be thought of as a thin layer of paint. The parameterizations of each curve by $S^1$ is oriented so that the red paint is on the left of the oriented tangent vector. Look at the top row: during the ambient isotopy from $\cal S_1$ to $\cal S_2$, the part of the immersed sphere that is not close to the tubes retains its orientation, hence the red paint stays inside. Now when we descend the right column to turn this to a circle, we realize that the red paint ends up outside.}
\label{fig:sides}
\end{figure}

\begin{figure}[htbp]
\begin{tikzpicture}
\node at (0,0) {\includegraphics[width=10cm]{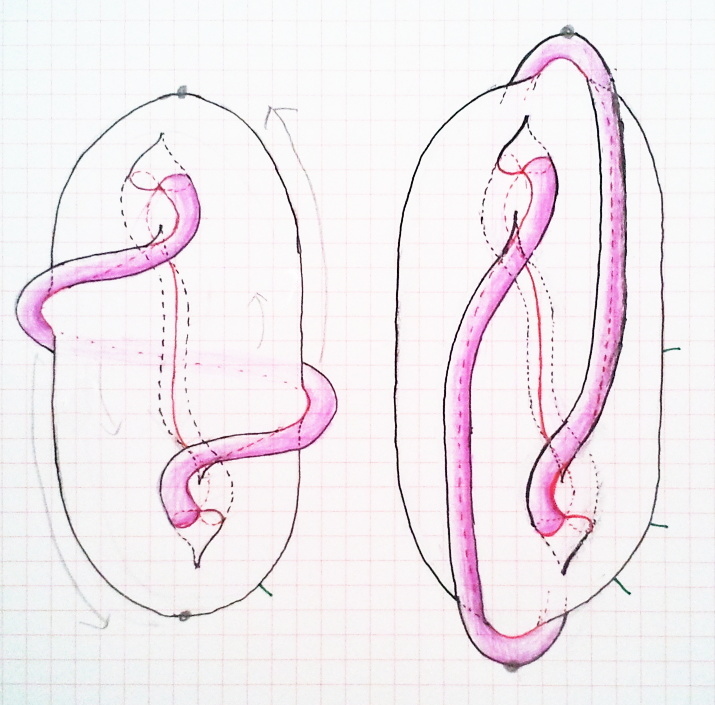}};
\node at (-4.2,-4.2) {$\cal S_1$};
\node at (4.2,-4.2) {$\cal S_2$};
\end{tikzpicture}
\caption{Hand drawn pictures of the key positions, i.e.\ the immersed surfaces $\cal S_1$ and $\cal S_2$. The face that correspond to the outside of the original sphere is white and the other face is purple. The red curves are the self-intersections. The black curves are the contour curves relative to the observer. Dashed lines represent hidden parts of these curves. For computer generated images of the same objects, see Figure~\ref{fig:S1S23D-comp}.}
\label{fig:S1S23D}
\end{figure}

\begin{figure}[htbp]
\begin{tikzpicture}
\node at (0,0) {\includegraphics[height=10cm]{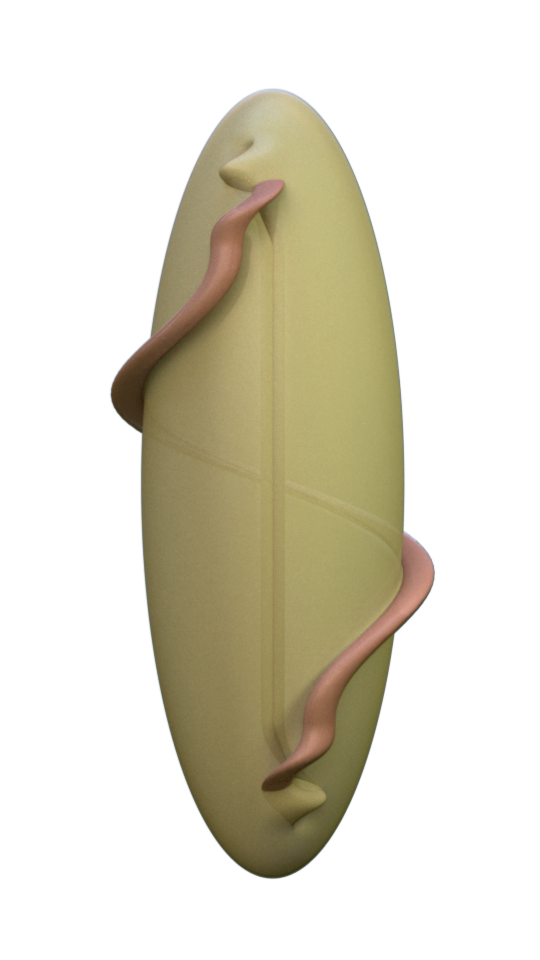}};
\node at (5.5,0) {\includegraphics[height=11cm]{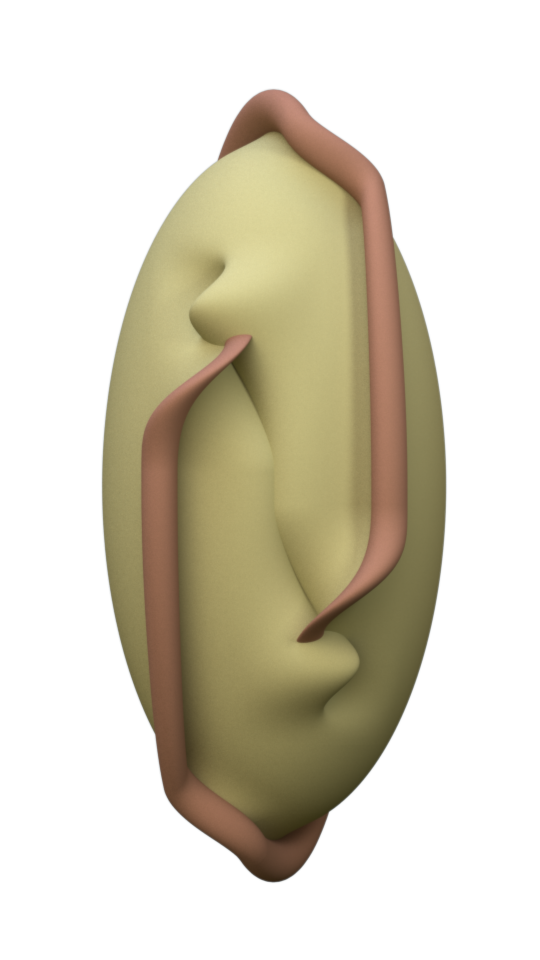}};
\node at (3,-8.5) {\includegraphics[height=7cm]{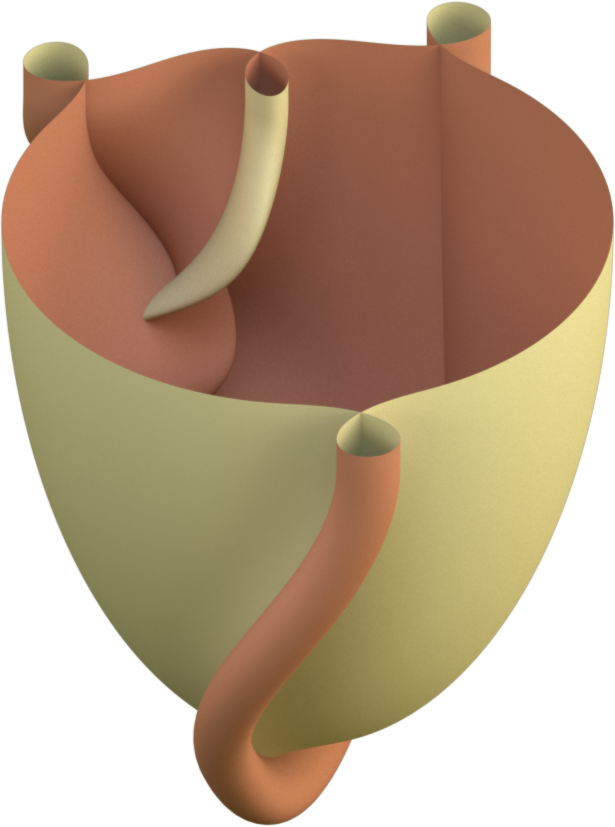}};
\node at (-2,-13.95) {\includegraphics[scale=0.3]{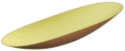}};
\node at (0.2,-13.78) {\includegraphics[scale=0.3]{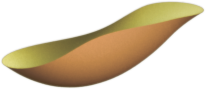}};
\node at (2.9,-13.68) {\includegraphics[scale=0.3]{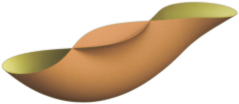}};
\node at (6.5,-13) {\includegraphics[scale=0.3]{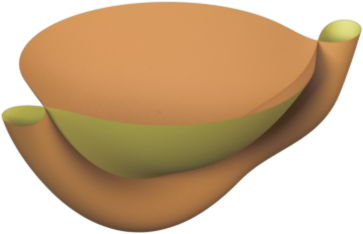}};
\end{tikzpicture}
\caption{Computer generated version of Figure~\ref{fig:S1S23D}. Left: we let object be slightly transparent so that the inner tube and the hidden part of the outer tube are visible. Right: the part of the external tube that is hidden goes nearly straight up along the back. Middle: $\cal S_2$ has been horizontally cut in half, so that we see the interior, and turned by a bit less than 180\degree\ so that the part of the external tube running on the back is more visible. Bottom: sections of $\cal S_2$ at different heights near the minimum.}
\label{fig:S1S23D-comp}
\end{figure}

\section{Postponed lemmas}

These technical lemmas are used in the proof that Lemma~\ref{lem:a} implies Proposition~\ref{prop:d}. They are proved in the appendix.

\subsection{Regularization} The following lemma is proved in Appendix~\ref{app:c1}.

\begin{lemma}\label{lem:c}
Assume that we have a path $t\in[0,1]\mapsto \gamma_t\in\cal I_n$ together with a homotopy $\wt h :[0,1]^2\to \cal I_n$ starting from this path: $\wt h(0,t) = \gamma_t$.
Assume moreover that:
\begin{itemize}
\item the map $(t,s)\mapsto \gamma_t(s)$ is $C^1$,
\item $\forall t$, $\wt h(1,t)$ is an embedding,
\item $\forall z$, $\wt h(z,0)$ is an embedding,
\item $\forall z$, $\wt h(z,1)$ is an embedding.
\end{itemize}
Then there exists another homotopy $h:(z,t)\mapsto h(z,t)\in\cal I_n$ with
\begin{itemize}
\item $\forall t$, $h(0,t)=\gamma_t$, (i.e.\ $h$ starts from the same path as $\wt h$)
\item $\forall z$, the map $h_z:(t,s)\mapsto h(z,t)(s)$ is $C^1$,
\item $z\mapsto h_z$ is continuous from $[0,1]$ to the Banach space $C^1([0,1]\times S^1,\R^2)$,
\item $\forall t$, $h(1,t)$ is an embedding,
\item $\forall z$, $h(z,0)$ is an embedding,
\item $\forall z$, $h(z,1)$ is an embedding.
\end{itemize}
\end{lemma}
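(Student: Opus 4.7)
The approach is to obtain $h$ by mollifying $\wt h$ in the $t$-variable with a convolution radius $\epsilon(z)$ that depends continuously on $z$ and vanishes at $z=0$, so that the path $h(0,\cdot)=\gamma$ is preserved exactly and smoothing activates only for $z>0$. Fix a nonnegative $C^\infty$ bump $\phi$ on $\R$ supported in $[-1,1]$ with $\int\phi=1$, and set $\phi_\epsilon(u)=\epsilon^{-1}\phi(u/\epsilon)$. To cope with the boundary $t\in\{0,1\}$, I extend each path $u\mapsto\wt h(z,u)$ from $[0,1]$ to all of $\R$ by odd reflection at the endpoints: $\wt h^{\mathrm{ext}}(z,u)(s):=2\wt h(z,0)(s)-\wt h(z,-u)(s)$ for $u<0$, and symmetrically across $u=1$. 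Each $\wt h^{\mathrm{ext}}(z,u)$ remains in $C^1(S^1,\R^2)$, and because $\gamma$ is jointly $C^1$ on $[0,1]\times S^1$, the extension $\gamma^{\mathrm{ext}}$ is jointly $C^1$ on $\R\times S^1$ (the two one-sided slopes at $u=0,1$ match by construction). Then set
\[
h(z,t)(s) \;:=\; \int_{\R}\phi_{\epsilon(z)}(t-u)\,\wt h^{\mathrm{ext}}(z,u)(s)\,du\quad(z>0),\qquad h(0,t)(s):=\gamma_t(s).
\]

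For $z>0$, $h_z$ is jointly $C^1$ because the smooth kernel gives continuity of $\partial_t h$, and $\partial_s$ passes under the integral by continuity of $\partial_s\wt h^{\mathrm{ext}}$ in $(u,s)$. A direct computation exploiting the symmetry of $\phi$ shows that odd reflection forces $h(z,0)=\wt h(z,0)$ and $h(z,1)=\wt h(z,1)$ exactly, so the endpoint embedding conditions transfer for free. The defining conditions of $\cal I_n$ (immersion, winding number $n$, horizontal-rightward tangent at the base point) persist for $\epsilon(z)$ small enough, because the convolution is a $C^0$-small perturbation of $\wt h(z,t)$ and averaging vectors in a narrow cone around a horizontal-rightward vector stays in that cone. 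That $h(1,t)$ remains an embedding for all $t$ follows from openness of embeddings in $\cal I_n$ combined with the same $C^0$-small perturbation estimate along the compact path $t\mapsto\wt h(1,t)$.

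The crux is continuity of $z\mapsto h_z$ into the Banach space $C^1([0,1]\times S^1,\R^2)$ at $z=0$, which reduces to controlling $\partial_t h(z,\cdot)-\partial_t\gamma$ uniformly in $(t,s)$. Splitting
\[
\partial_t h(z,t) - \partial_t\gamma_t \;=\; \phi'_{\epsilon(z)}*_u\bigl[\wt h^{\mathrm{ext}}(z,u)-\gamma^{\mathrm{ext}}(u)\bigr] \;+\; \bigl[\phi'_{\epsilon(z)}*_u\gamma^{\mathrm{ext}}(u)-\partial_t\gamma_t\bigr],
\]
the first term is bounded in sup by $(C/\epsilon(z))\,\delta(z)$ with $\delta(z):=\sup_{u\in[0,1]}\|\wt h(z,u)-\gamma_u\|_{C^1(S^1)}\to 0$ (by continuity of $\wt h$), while the second tends to $0$ uniformly in $(t,s)$ as $\epsilon(z)\to 0$, because $\gamma^{\mathrm{ext}}$ is jointly $C^1$ on $\R\times S^1$. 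Taking $\epsilon(z):=\max\bigl(\sqrt{\delta(z)},\,z/2\bigr)$ (further shrunk, still continuously, to enforce the $\cal I_n$ constraints and the $h(1,\cdot)$-embedding margin) yields $\epsilon$ continuous, positive for $z>0$, vanishing at $z=0$, with $\delta(z)/\epsilon(z)\leq\sqrt{\delta(z)}\to 0$. Continuity of $z\mapsto h_z$ at $z_0>0$ is automatic from continuous dependence of convolution on its radius and integrand. The main obstacle is reconciling the requirement $\epsilon(z)\to 0$ (needed to recover $\gamma$) with the $1/\epsilon(z)$-blow-up of $\phi'_{\epsilon}$; it is overcome precisely because the defect $\wt h-\gamma$ is itself $C^1$-small when $z$ is small, leaving the modulus $\delta(z)$ to absorb the blow-up.
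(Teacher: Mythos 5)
Your argument is correct, but it follows a genuinely different route from the paper's. The paper mollifies the whole map $(z,t,s)\mapsto\wt h(z,t)(s)$ in all three variables at once (after a continuous extension beyond $[0,1]^3$), accepts that the $z=0$ slice is thereby perturbed, and repairs this by concatenating with a straight-line segment in the Banach space $C^1([0,1]\times S^1,\R^2)$ from $(t,s)\mapsto\gamma_t(s)$ to the mollified $z=0$ slice; the embeddedness of the $t=0$, $t=1$ and $z=1$ slices is then only preserved up to a $C^1$-small error that keeps them embedded. You instead mollify only in $t$, with a radius $\epsilon(z)$ vanishing at $z=0$, so the initial path is recovered exactly and no concatenation or $z$-reparametrization is needed; moreover your odd reflection at $t=0,1$ makes $h(z,0)$ and $h(z,1)$ exactly equal to $\wt h(z,0)$ and $\wt h(z,1)$. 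The price is the competition between the $1/\epsilon(z)$ blow-up of $\phi'_{\epsilon(z)}$ and the smallness of the defect $\wt h(z,\cdot)-\gamma$, which is the one genuinely delicate point of your approach and which you resolve correctly with $\epsilon(z)\geq\sqrt{\delta(z)}$. Your version buys exactness on three of the four boundary faces; the paper's version is more symmetric in the variables and immediately yields, if desired, joint $C^1$ regularity in $(z,t,s)$, which a $t$-only mollification cannot give.

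Two justifications should be tightened, though neither is a real gap. First, ``openness of embeddings combined with a $C^0$-small perturbation'' is not sufficient: embeddings (and immersions) are open in the $C^1$ topology, not the $C^0$ topology. What you actually have, and should invoke, is that the convolution is a $C^1(S^1)$-small perturbation of $\wt h(z,t)$, uniformly in $(z,t)$, by uniform continuity of $(z,u)\mapsto\wt h(z,u)$ into $C^1(S^1,\R^2)$ on the compact square; this is what preserves immersions, the winding number, and the embeddedness of $t\mapsto\wt h(1,t)$. Second, the base-point condition defining $\cal I_n$ (namely $h(z,t)'(0)$ a \emph{positive real}) is not an open condition, so the ``narrow cone'' argument does not apply to the horizontality part. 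The correct observation is that every vector being averaged, $\partial_s\wt h^{\mathrm{ext}}(z,u)(0)$, is exactly real -- for $u\in[0,1]$ because $\wt h(z,u)\in\cal I_n$, and for reflected $u$ because $2\wt h(z,0)'(0)-\wt h(z,-u)'(0)$ is a difference of reals -- so the average is real, and it is positive for small $\epsilon(z)$ by the same uniform continuity. Finally, state explicitly that $\phi$ is even, since your endpoint computation uses it.
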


We can even choose it so that $(z,t,s)\mapsto h(z,t)(s)$ is $C^1$, but this will not be used here.

\subsection{Recapping}

The following lemma is proved in Appendix~\ref{app:recap}.

\begin{lemma}\label{lem:recap}
Let $z_{\min}<z_0<z_1<z_{\max}$ be reals.
Let $S^2$ be a Euclidean sphere in $\R^3$ with extremal heights $z_{\min}$ and $z_{\max}$.
Let $\cal I_Z$ be the set of $C^1$ immersions from $S^2$ into $\R^3$ that preserve height, i.e.\ such that for all $z$, points at height $z$ are mapped to points at height $z$, and that are embeddings restricted to the set of points of $S^2$ with $z\leq z_0$ and similarly for $z\geq z_1$.
Let $\cal I_Z'$ be the set of $C^1$ immersions $S^2\cap (\R^2\times[z_0,z_1]) \to \R^3$ that preserve height and that are embeddings on the two boundary curves of the domain: $z=z_0$ and $z=z_1$. Note that domain restriction defines a map from $\cal I_Z$ to $\cal I_Z'$.
\begin{enumerate}
\item\label{item:1} (The restriction map is surjective.) Assume we are given a map $f \in \cal I_Z'$. Then there exists an extension $g\in \cal I_Z$.
\item\label{item:2} (Lifting property.) Assume we are given a map $g_0\in\cal I_Z$ and continuous path $f_t\in \cal I_Z'$ starting from the restriction of $g_0$.
Then there exists a family of extensions $g_t\in \cal I_Z$ of $f_t$, starting from $g_0$, such that $t\mapsto g_t$ is continuous.
\end{enumerate}
\end{lemma}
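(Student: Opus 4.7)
I describe the construction of the top cap; the bottom cap is handled symmetrically.

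\textbf{Part (1).} Given $f\in\cal I_Z'$, the boundary curve $\gamma_1(s)=f(s,z_1)$ is a $C^1$ embedded circle in the plane $z=z_1$ and bounds a disk $D$ in that plane; I fix an interior point $p_0\in D$. The top cap is built by filling $[z_1,z_{\max}]$ with a family $(\gamma_z)$ of embedded curves in three successive slabs. On a thin collar $[z_1,z_1+\delta]$, I use the first-order Taylor extension $\tilde f(s,z)=f(s,z_1)+(z-z_1)\partial_z f(s,z_1)$: this is height-preserving (because $\partial_z f$ has third component $1$), $C^1$-matches $f$ at $z_1$, and is an embedded collar for $\delta$ small, by continuity. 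On the middle slab $[z_1+\delta,z_1+2\delta]$, I deform the curve onto a small round circle of radius $\eta$ centered at $p_0$, via an ambient isotopy of the plane (available from the $C^1$ Jordan--Schoenflies theorem). On the final slab $[z_1+2\delta,z_{\max}]$, I cap off with a linearly scaled standard hemisphere centered at $p_0$, chosen so that its circle at $z_1+2\delta$ also has radius $\eta$; such a cap is smooth at the pole. $C^1$-compatibility at the two internal seams $z=z_1+\delta$ and $z=z_1+2\delta$ is enforced by the slow-down trick mentioned earlier in the paper: reparameterize $z$ on each slab by a smooth function whose derivative vanishes at the endpoints, so that the horizontal components of $\partial_z\gamma_z$ vanish there.

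\textbf{Part (2).} I use a local lifting property plus a compactness argument. For the local step at a time $t_0$, let $g_{t_0}\in\cal I_Z$ extend $f_{t_0}$; for $f\in\cal I_Z'$ sufficiently close to $f_{t_0}$ in $C^1$, consider the small horizontal $C^1$ vector fields $v_0(s)=f(s,z_1)-f_{t_0}(s,z_1)$ and $v_1(s)=\partial_z f(s,z_1)-\partial_z f_{t_0}(s,z_1)$ on the seam circle (horizontal because both are height preserving, so $\partial_z f$ and $\partial_z f_{t_0}$ share the third component $1$). On the top cap, define the correction $V(s,z)=\chi(z)\bigl(v_0(s)+(z-z_1)v_1(s)\bigr)$, where $\chi$ is a smooth cutoff equal to $1$ near $z_1$ and vanishing near $z_{\max}$, and set $g=g_{t_0}+V$ on the cap. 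By design $g$ is height-preserving, $C^1$-matches $f$ at $z_1$, and remains an embedding for $f$ close enough to $f_{t_0}$ (openness of embeddings in the $C^1$ topology). Since $V$ depends linearly and continuously on $(v_0,v_1)$, this defines a continuous local section of the restriction map near $f_{t_0}$. To globalize, cover $[0,1]$ by finitely many closed intervals $[t_{i-1},t_i]$ on each of which such a section applies, and concatenate by re-basing at each junction $t_i$.

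\textbf{Main obstacle.} The most delicate point is the $C^1$ match at the seam $z=z_1$ with the given middle-region map $f$: because $\partial_z f$ need not vanish there, the slow-down trick does not apply and a genuine first-order Taylor extension must be used. In part (2), the crucial feature is that this first-order correction is affine in the boundary data $(v_0,v_1)$, which makes the lift continuous in $f$ and ensures the embedding property is preserved through $C^1$ smallness of the correction.
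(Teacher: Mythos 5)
Your three-slab architecture for Part (1) and local-section-plus-compactness scheme for Part (2) are both workable in outline, but the step you yourself identify as the crux --- the first-order Taylor extension across the seam $z=z_1$ --- fails in the $C^1$ category, and this is a genuine gap. The map $\wt f(s,z)=f(s,z_1)+(z-z_1)\,\partial f/\partial z(s,z_1)$ is $C^1$ in the pair $(s,z)$ only if $s\mapsto \partial f/\partial z(s,z_1)$ is itself $C^1$, i.e.\ only if the crossed derivative $\frac{\partial}{\partial s}\frac{\partial}{\partial z}f$ exists and is continuous along the seam. A $C^1$ immersion need not have this (the paper warns against exactly this trap in a footnote of Section~\ref{sec:tra} and again at the start of Appendix~\ref{app:recap}: extension of $C^1$ functions of several variables ``is trickier than it looks''). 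The same defect infects your Part (2): the correction $V(s,z)=\chi(z)\big(v_0(s)+(z-z_1)v_1(s)\big)$ involves $v_1(s)=\partial f/\partial z(s,z_1)-\partial f_{t_0}/\partial z(s,z_1)$, which is merely continuous in $s$, so $V$ is not $C^1$ for $z>z_1$ and $g_{t_0}+V$ leaves the class of $C^1$ immersions; the claimed continuity of the local section in the $C^1$ norm breaks down for the same reason.

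The repair is what the paper does: extend $f$ past the seam by Whitney reflection, $\wt f(s,z)=2f(s,z_1)-f(s,2z_1-z)$, which is genuinely $C^1$ and matches $f$ to first order at $z=z_1$ without ever differentiating $\partial f/\partial z$ in $s$; then, over a thin band $[z_1,z_1+\eta]$, interpolate with a cutoff $\chi\big(\tfrac{z-z_1}{\eta}\big)$ between $\wt f$ and a fixed reference cap --- the standard cap of $S^2$ (after straightening the boundary curve by an ambient $C^1$ diffeomorphism from the $C^1$ Jordan--Schoenflies complement, Proposition~\ref{prop:cpl}) for Part (1), and the cap of $g_0$ for Part (2) --- taking $\eta$ small so that each interpolated horizontal slice stays embedded. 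Your remaining ingredients (slow-down at the two \emph{internal} seams, hemispherical top, openness of embeddings, and the finite-cover concatenation in Part (2)) are fine, and your slab decomposition is an acceptable variant of the paper's single-interpolation construction once the seam extension is done by reflection rather than by Taylor expansion.
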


In the statement above, is to be understood that the sets $\cal I_Z$ and $\cal I_Z'$ are endowed with the topology of the corresponding Banach spaces of $C^1$ functions.

\part*{}
\renewcommand{\thesubsection}{\Alph{subsection}}
\stepcounter{section}
\section*{Appendix}

\subsection{Proof of Lemma~\ref{lem:a}}
Let us use \emph{s.d.}\ as an abbreviation for ``strong deformation''.

A proof of contractibility of $\cal I_n$ can be found in \cite{MiMu} (Theorem~2.10, easy modification) for $n\neq 0$ and in \cite{KoMi} (Proposition~2.1, easy modification) for $n=0$.
Note that in our application to sphere eversion, we use only the case $n=\pm1$. 
The construction is simple and explicit in the case $n\neq 0$, and in fact coincides with the Whitney construction \cite{Wh} in the case of constant speed curves.
As an open subset of a Banach space, contractibility of $\cal I_n$ implies s.d.-retractability to any of its point, so we are done.
However, it is also possible to directly write explicit s.d.-retractions to any given point, which we now do.

Let us parameterize the circle by $[0,1]$ with $1$ and $0$ identified: i.e.\ we see elements of $\cal I_n$ as maps from $[0,1]$ to $\C$.
The use of complex numbers below will only be a convenience.

To contract $\cal I_n$ to a single curve, the idea in short is to make a linear interpolation of the logarithm of the derivative. But the interpolated curves do not close up. This can be corrected by introducing a single corrugation (this method will not be explained here). Or if $n\neq 0$, we can use Whitney's trick (explained below). 

We now give the details in the case $n\neq 0$. Let $\alpha\in\cal I_n$ be the path to which we want to retract $\cal I_n$. By definition of $\cal I_n$, $\alpha'(0)$ is a positive real. Let $\cal I'_n$ be the set of maps $\gamma\in \cal I_n$ for which $\gamma'(0)=1$ and $\gamma(0)=0$.

We may assume for simplicity that $\alpha\in\cal I_n'$, the other cases being obtained from this one by composition with a similitude.

We first begin with a strong deformation retraction from $\cal I_n$ to $\cal I'_n$, i.e.\ a map $(\gamma,t)\in\cal I'_n\times [0,1] \to \gamma_t\in \cal I'_n$ that is continuous in the pair $(\gamma,t)$, with $\gamma_0=\gamma$, $\gamma_1\in \cal I'_n$ and with $\gamma\in \cal I'_n$ $\implies$ $\gamma_t=\gamma$. It is easy to find one: for instance use a rescaling as follows $\gamma_t = a_t.(\gamma -b_t)$ with $a_t=\exp(-t\log \gamma'(0))$ (recall that $\gamma'(0)$ is a positive real) and $b_t=t\gamma(0)$.

We are left to find a s.d.\ retraction of $\cal I'_n$ to $\alpha$, i.e.\ a map $(\gamma,t)\in\cal I'_n\times [0,1] \to \gamma_t\in \cal I'_n$ that is continuous in the pair $(\gamma,t)$, with $\gamma_0=\gamma$, $\gamma_1=\alpha$ and with $\gamma=\alpha$ $\implies$ $\gamma_t=\alpha$.
Let $g_\gamma:[0,1]\to\C$ be the unique determination of $\log \gamma'$ with $g_\gamma(0)=0$: $\exp g_\gamma(x)=\gamma'(x)$. Then $g_\gamma(1)=g_\alpha(1)=2\pi i n$.
 
The first attempt is to let $\gamma_t(x)$ be the curve such that $\gamma_t(0)=0$ and
\[ \gamma_t'(x)=\exp\big( (1-t) g_\gamma(x) + t g_\alpha(x)\big)
.\]
 This almost works: the map $(\gamma,t)\mapsto \gamma_t$ is well-defined, continuous, and we have $\gamma_0=\gamma$ and $\gamma_1=\alpha$. But for $t$ between $0$ and $1$, the curve $\gamma_t$ does not close up: in most cases $\gamma_t(0)\neq \gamma_t(1)$. (Note that still, $\gamma_t'(0)=\gamma_t'(1)$.)

The adapted Whitney's trick (see also \cite{MiMu}) is to define instead
\bEA 
f_t(x) & = & \exp\big( (1-t) g_\gamma(x) + t g_\alpha(x)\big)
\\
a_t &=& \int_0^1 f_t(x)dx \left/\int_0^1 |f_t(x)| dx\right..
\\
\gamma_t(0) &=& 0
\\
\gamma_t'(x) &=& f_t(x)- a_t |f_t(x)|
\eEA
This time the curve closes up. Note that if $\gamma=\alpha$ then $\forall t$, $f_t=\alpha'$ and $a_t=0$, thus $\gamma_t=\alpha$.
If $n\neq 0$ then $|a_t|<1$ and thus $\gamma_t$ is an immersion.
(If $n=0$ then the formula will fail at least in one case: $\gamma=\overline\alpha$ and $t=1/2$, since then $\forall x$, $f_t(x)>0$ thus $a_t=1$ thus $\forall x$, $\gamma'_t(x)= 0$.) As pointed out in \cite{MiMu} this formula has many advantages: it gives a smooth retraction and works right away in $C^\infty$.

\subsection{Proof of Lemma~\ref{lem:c}}\label{app:c1}
Notice that $\cal I_n$ is a subset of the Banach space $C^1(S^1,\R^2)$.
One hypothesis is that the initial path $\gamma_t(s)$ is $C^1$ in
the pair $(t,s)$.\footnote{This is weaker than than asking that the path $t\mapsto\gamma_t$ is a $C^1$ function of $t$ from $\R$ to the aforementioned Banach space. Indeed in the second case, we would require at least that $\frac{\partial}{\partial s}\frac\partial{\partial t} \gamma$ exists and be continuous in the pair $(t,s)$. But there are functions of $(t,s)$ that are $C^1$ in $(t,s)$ but do not satisfy this.}

Let us denote $\gamma^z_t=h(z,t)$. 
The continuity of the map $(z,t)\mapsto \gamma_t^z$ as a map from $[0,1]^2$ to the Banach space $C^1(S^1,\R^2)$ implies in particular that $\frac\partial{\partial s}\gamma_t^z(s)$ is a continuous function of $(z,t,s)$.
Consider a $C^1$ regularization $\rho(z,t,s)$ of the function $(z,t,s)\mapsto \gamma_t^u(s)$, obtained for instance by first taking a continuous extension to a neighborhood of $[0,1]^3$ in $\R^3$ whose $\partial/\partial s$ exists and is continuous\footnote{This is easy: let $f(z,t,s)=\gamma_t^z(s)$. First extend  $\partial f/\partial s$ by a $C^0$ map, and choose a $C^0$ map extending the slice $(z,t)\mapsto f(z,t,0)$. 
Then integrate.} and then convolving with a $C^1$ kernel.
This new map has no reason to be equal to $\gamma_t(s)$ for $z=0$. However, if the kernel is well-chosen, then the following quantities can be simultaneously taken arbitrarily small: \bEA
&&\sup_{z,t,s\in[0,1]} |\rho(z,t,s)-\gamma_t^z(s)|
\\
&&\sup_{z,t,s\in[0,1]} \left|\frac\partial{\partial s}\rho(z,t,s)-\frac\partial{\partial s}\gamma_t^z(s)\right|
\eEA
In particular (by the control on $\frac\partial{\partial s}$ and compactness of $[0,1]^3$) the curves $s\mapsto\rho(z,t,s)$ are still immersions. For the same reasons, we can ensure that for all $z$ and all $t$, the curves $s\mapsto\rho(z,0,s)$, $s\mapsto\rho(z,1,s)$ and $s\mapsto \rho(1,t,s)$ are still embeddings,
Now just complete the homotopy $\rho$ by concatenating it with a straight linear path $z\in[0,1]\mapsto\wt \rho_z$ in the Banach space $C^1([0,1]\times[0,1],\R^2)$ from $(t,s)\mapsto\gamma_t(s)$ to $(t,s)\mapsto \rho(0,t,s)$: i.e.\ let
\[\wt\rho(z,t,s)=\wt\rho_z(t,s)=(1-z)\gamma_t(s)+\rho(0,t,s).\]
By the control on $\partial \rho/\partial s$, if the corresponding quantities have been taken small enough, then for all $z,t$ the curves $s\mapsto\wt\rho(z,t,s)$ are still immersions and the top and bottom curves $s\mapsto\wt\rho(z,0,s)$ and $s\mapsto\wt\rho(z,1,s)$ are still embeddings.

Note: the concatenation will in most cases not be $C^1$ with respect to $z$ at the seam. It can be easily fixed (for instance by a change of variable on $z$ whose derivative vanishes at the seam), but this was not required.

\subsection{Embedded $C^1$ circles}\label{subsec:embed}

Here, by \emph{the circle}, or $S^1$, we mean $\R/\Z$. The following is well-known, cited here for reference:

\begin{proposition} Any compact and one dimensional $C^1$ differential manifold is diffeomorphic to the circle. 
\end{proposition}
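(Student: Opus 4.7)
The plan is to construct a global $C^1$ parameterization of $M$ by extending a local chart as far as possible, and then use compactness together with $1$-dimensionality to show that this parameterization must close up into a circle. Implicitly I assume $M$ is connected and without boundary, as the stated conclusion requires; a general compact $1$-manifold without boundary would instead be a finite disjoint union of circles.

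First, by compactness fix a finite $C^1$ atlas on $M$. Consider the family of injective $C^1$ immersions $\gamma \colon J \to M$ from an open interval $J \subset \R$ into $M$, partially ordered by extension up to $C^1$ reparameterization. The atlas guarantees the family is nonempty, and Zorn's lemma (or a finite induction over the atlas) yields a maximal element $\gamma_\ast \colon J_\ast \to M$. Because $\gamma_\ast$ is an immersion from a $1$-dimensional open set, its image is open in $M$.

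The central step is to show $\gamma_\ast$ closes up into a diffeomorphism from $S^1$. If $\gamma_\ast(J_\ast)$ were a proper subset of $M$, then connectedness would supply a boundary point $p$. Pick a chart around $p$: it identifies a neighborhood of $p$ in $M$ with an open interval in $\R$, and reveals how the image of $\gamma_\ast$ near $p$ sits inside that interval. Either only one end of $J_\ast$ limits to $p$, in which case a direct reparameterization extends $\gamma_\ast$ through $p$ and contradicts maximality; or both ends of $J_\ast$ limit to $p$, in which case identifying them yields a $C^1$ immersion $\tilde\gamma \colon S^1 \to M$ whose image is both open and closed in $M$, hence equal to $M$. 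Injectivity of $\tilde\gamma$ away from the identified point is inherited from $\gamma_\ast$, and the inverse function theorem then promotes $\tilde\gamma$ to a $C^1$ diffeomorphism.

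The main obstacle is verifying that the closure at $p$ is genuinely $C^1$, not merely continuous. In the chart around $p$, the two approaches of $\gamma_\ast$ must land on opposite sides of the coordinate of $p$ (otherwise injectivity of $\gamma_\ast$ would already fail on a sequence accumulating at $p$), and each arrives with a well-defined nonvanishing one-sided velocity. A $C^1$ orientation-preserving reparameterization of one half of $J_\ast$ near its endpoint then arranges the one-sided velocities to match at the seam, making $\tilde\gamma$ genuinely $C^1$ across the identified point.
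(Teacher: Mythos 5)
The paper does not actually prove this proposition; it cites it as well known, so there is nothing to compare your argument to. Your strategy --- a maximal injective $C^1$ immersed arc $\gamma_\ast\colon J_\ast\to M$ followed by an extend-or-close-up dichotomy at a boundary point of its image --- is the standard route and is sound in outline, and you are right that the statement as written tacitly assumes $M$ connected and without boundary. But three steps are asserted exactly where the work lies. First, you never treat the case $\gamma_\ast(J_\ast)=M$: your entire analysis sits under the hypothesis that the image is proper. That missing case is precisely where compactness of $M$ enters: a bijective open continuous map is a homeomorphism, so $M$ would be homeomorphic to the open interval $J_\ast$, contradicting compactness. Second, the phrase ``reveals how the image of $\gamma_\ast$ near $p$ sits inside that interval'' hides the crux: a priori the image could accumulate at $p$ through infinitely many pieces. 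The argument that rules this out goes as follows. Work in a chart $U\cong(-1,1)$ with $p\leftrightarrow 0$. On each connected component $(a,b)$ of $\gamma_\ast^{-1}(U)$ the chart coordinate of $\gamma_\ast$ is injective and continuous, hence strictly monotone; if an endpoint such as $a$ lies in the interior of $J_\ast$ then $\gamma_\ast(a)\notin U$ forces that coordinate to tend to $\pm1$ as $t\to a$. Hence a component with both endpoints interior to $J_\ast$ maps \emph{onto} $U$ and would contain $p$, which is impossible. So every component of $\gamma_\ast^{-1}(U)$ abuts an end of $J_\ast$, there are at most two of them with disjoint images, and each relevant end converges to $p$ monotonically in the chart. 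Only after this is your dichotomy --- and the injectivity of the extension in the one-end case --- actually justified.

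Third, the claim that each end ``arrives with a well-defined nonvanishing one-sided velocity'' is false in general for a $C^1$ immersion of an open interval: the derivative can oscillate without converging (take a primitive of $2+\sin(1/s)$ on $(0,1)$). You cannot match velocities that do not exist. The repair is the one your last sentence gestures at, applied more aggressively: near each end of $J_\ast$ the map $\phi\circ\gamma_\ast$ is itself a $C^1$ diffeomorphism onto a one-sided neighborhood of $0$ in the chart, so reparameterize $\gamma_\ast$ there by the chart coordinate; in the new parameter the curve is literally $\phi^{-1}$ near the seam, and both the extension through $p$ and the $C^1$ gluing of the two ends become automatic. With these three points filled in, the proof is complete.
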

As a consequence, any compact $C^1$ sub-manifold of $\R^2$ is the image of the circle $S^1$ by an embedding.
In this article, such an object is called a \emph{$C^1$ embedded circle}.

\begin{proposition}[$C^1$ Jordan-Shoenflies theorem]\label{prop:c1js}
For all $C^1$ embedded circle in $\R^2$ there is an orientation preserving $C^1$ diffeomorphism of $\R^2$ mapping it to a Euclidean circle.
\end{proposition}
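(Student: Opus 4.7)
The plan is to reduce to the $C^\infty$ Jordan--Schoenflies theorem by pushing the given embedded circle $C$ onto a nearby smooth approximation through an ambient $C^1$ diffeomorphism built from a tubular neighborhood.

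Let $f : S^1 \to \R^2$ be a $C^1$ parameterization of $C$. First, I would apply the $C^1$ tubular neighborhood theorem to obtain $\epsilon > 0$ and a $C^1$ diffeomorphism $\Phi : S^1 \times (-\epsilon,\epsilon) \to N$ onto an annular neighborhood of $C$, with $\Phi(\cdot,0) = f$. By the topological Jordan curve theorem $\R^2 \setminus C$ has exactly two connected components, and after possibly flipping the sign of $t$ I may assume $\Phi(S^1 \times (-\epsilon,0))$ lies in the bounded one, so that $f$ is positively oriented with respect to the interior.

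Next, I would regularize $f$ by convolution with a smooth mollifier on $S^1 = \R/\Z$ to produce a $C^\infty$ embedding $\tilde f$ arbitrarily $C^1$-close to $f$, with image $\tilde C \subset N$. By the implicit function theorem, for $\tilde f$ close enough to $f$ the set $\Phi^{-1}(\tilde C)$ is a $C^\infty$ graph $\{(s,\eta(s)) : s \in S^1\}$ for some $\eta : S^1 \to (-\epsilon,\epsilon)$ of small $C^1$ norm (after a $C^1$-small reparameterization of $\tilde f$, which does not change $\tilde C$). Pick a $C^1$ bump $\chi : \R \to [0,1]$ supported in $(-\epsilon,\epsilon)$ and equal to $1$ near $0$, and define
\[
H(x) = \begin{cases} \Phi\bigl(s,\; t + \chi(t)\,\eta(s)\bigr) & \text{if } x = \Phi(s,t) \in N, \\ x & \text{if } x \notin N. \end{cases}
\]
The two pieces match smoothly along $\partial N$ because $\chi(\pm\epsilon)=0$, and the Jacobian in tubular coordinates has determinant $1 + \chi'(t)\,\eta(s) > 0$ as long as $\|\chi'\eta\|_\infty < 1$. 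For $\eta$ small enough, $H$ is therefore a $C^1$ orientation preserving diffeomorphism of $\R^2$ (equal to the identity at infinity), mapping $C$ onto $\tilde C$.

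Finally, I would invoke the classical $C^\infty$ Jordan--Schoenflies theorem to obtain an orientation-preserving $C^\infty$ diffeomorphism of $\R^2$ sending $\tilde C$ to a Euclidean circle; composing with $H$ yields the required map. The main obstacle is this last step: the smooth Jordan--Schoenflies is non-trivial but standard. One classical route is to use the smooth collar provided by the tubular neighborhood to build a Morse function on the closed interior of $\tilde C$ with $\tilde C$ as its maximal level set and a unique interior minimum, and then use the (suitably normalized) gradient flow to diffeomorph the interior to the closed unit disk; the exterior is handled symmetrically, e.g.\ by inversion $x\mapsto x/|x|^2$.
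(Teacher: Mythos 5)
Your argument is sound, but note that the paper does not prove this proposition at all: it is quoted as a known result with a reference (\cite{Mo}), so any complete proof is ``a different route.'' What you propose is the standard reduction of the $C^1$ case to the $C^\infty$ case: mollify to get a nearby smooth circle $\tilde C$, push $C$ onto $\tilde C$ by a compactly supported $C^1$ ambient diffeomorphism built in tubular coordinates, and then invoke the smooth Jordan--Schoenflies theorem. This reduction is correct and buys genuine content (it shows the $C^1$ statement follows from the smooth one), but be aware that the remaining smooth Schoenflies step is itself the hard classical theorem, so your write-up is a reduction plus a sketch rather than a self-contained proof --- which is acceptable here, since the paper itself treats the proposition as citable. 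Two technical points deserve care. First, the ``$C^1$ tubular neighborhood theorem'' for a $C^1$ curve cannot be run with the normal field of $f$, which is only continuous; one must take a \emph{smoothed} transverse field $\nu(s)$ (mollify $Jf'/|f'|$; transversality is open and $S^1$ compact) and set $\Phi(s,t)=f(s)+t\,\nu(s)$, which is then $C^1$ with invertible differential along $t=0$. Second, since $\Phi$ is only $C^1$, the set $\Phi^{-1}(\tilde C)$ is only a \emph{$C^1$} graph $\{(s,\eta(s))\}$, not a $C^\infty$ one as you claim; this is harmless, because your map $H$ only needs $\eta$ to be $C^1$ with small $C^1$ norm, and the smoothness needed for the final step lives on $\tilde C$ itself, which is the image of the smooth $\tilde f$ in $\R^2$. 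With those corrections the determinant computation $1+\chi'(t)\eta(s)>0$ and the gluing with the identity outside $N$ go through exactly as you wrote them.
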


See \cite{Mo}. On the set of $C^1$ diffeomorphism of $\R^2$, we put the topology of uniform convergence on compact subsets, of the maps and of their derivatives of order $1$. Inversion, i.e.\ the map $\phi\mapsto \phi^{-1}$, is continuous for this topology.

The following lemma is easy:
\begin{lemma}\label{lem:JF}
Any orientation preserving $C^1$ diffeomorphism of $\R^2$ is isotopic to the identity (by this we mean there is a continuous path to the identity in the set of $C^1$ diffeomorphisms). In particular it preserves the winding number of the tangent of parameterized curves. 
\end{lemma}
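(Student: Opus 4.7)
The plan is to deform $\phi$ to the identity in three elementary stages (translate, rescale, linearize), then derive the winding-number statement as a corollary.

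First I would compose $\phi$ with translations to arrange $\phi(0)=0$: the path $\phi_t(x)=\phi(x)-(1-t)\phi(0)$ for $t\in[0,1]$ is a continuous path of $C^1$ diffeomorphisms connecting $\phi$ to a diffeomorphism fixing the origin. Continuity in the compact-open $C^1$ topology is immediate since only the constant term moves. Then, assuming $\phi(0)=0$, I would use the Alexander-style rescaling
\[
\phi_t(x)=\begin{cases}\phi(tx)/t & t\in(0,1],\\ D\phi(0)\cdot x & t=0.\end{cases}
\]
For $t>0$ this is evidently a $C^1$ diffeomorphism (composition of $\phi$ with linear rescalings), and $D\phi_t(x)=D\phi(tx)$. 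As $t\to 0^+$, uniform continuity of $D\phi$ on compact neighborhoods of $0$ gives $D\phi_t\to D\phi(0)$ uniformly on compacts, and a mean value estimate
\[
\phi_t(x)-D\phi(0)\cdot x = \int_0^1\bigl[D\phi(\tau tx)-D\phi(0)\bigr]\,x\,d\tau
\]
gives uniform convergence of $\phi_t$ to $D\phi(0)\cdot x$ as well. So the path is continuous on all of $[0,1]$ and ends at the linear map $D\phi(0)\in GL^+(2,\R)$.

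Next I would connect $D\phi(0)$ to the identity inside $GL^+(2,\R)$, which is path-connected: for instance via polar decomposition $D\phi(0)=R\cdot S$ with $R$ a rotation (isotope the angle to $0$) and $S$ symmetric positive definite (isotope its eigenvalues to $1$ keeping the eigendirections). Concatenating the three paths produces the desired isotopy from $\phi$ to $\mathrm{Id}$ inside the space of $C^1$ diffeomorphisms of $\R^2$.

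For the winding-number assertion, let $\gamma:S^1\to\R^2$ be a $C^1$ immersion and let $\phi_t$ be the isotopy constructed above. The tangent to $\phi_t\circ\gamma$ at $s$ is $D\phi_t(\gamma(s))\cdot\gamma'(s)$, which is a non-vanishing vector field on $S^1$ depending continuously on $t$ in the sup-norm (this uses only the $C^0$ convergence of $D\phi_t$ on the compact $\gamma(S^1)$ and the fact that $D\phi_t$ stays in $GL^+$). The winding number of a continuous non-vanishing vector field on $S^1$ is an integer-valued continuous function of the field, hence constant along the isotopy, so the winding number of the tangent of $\phi\circ\gamma$ equals that of $\gamma$.

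The only step that requires some care is the continuity of the rescaling path at $t=0$ in the $C^1$ compact-open topology; everything else is formal. Once that is in hand, the lemma follows without further effort.
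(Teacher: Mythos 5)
Your proof is correct and follows essentially the same route as the paper's (sketched) argument: conjugating by rescalings $x\mapsto tx$ to deform $\phi$ into its tangent map at the origin, then using connectedness of $GL^+(2,\R)$; you simply add the harmless preliminary translation and spell out the $C^1$ compact-open continuity at $t=0$, which is exactly the point worth checking.
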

\noindent An example of proof is given by conjugating by rescalings: this deforms a $C^1$ diffeomorphism into its tangent map at the origin; now $GL^+(\R^2)$ is connected.

Using Proposition~\ref{prop:c1js} and Lemma~\ref{lem:JF} we get that for a $C^1$ embedding $S^1\to\R^2$, the tangent vector has winding number $\pm 1$.
Proposition~\ref{prop:c1js} is easily complemented as follows. Call \emph{canonical embedding} of $S^1$ in $\R^2$ the map $\gamma_{\mathrm{can}}:\R/\Z\to \R^2$, $x\mapsto(\cos(2\pi x),\sin(2\pi x))$.

\begin{proposition}[Complement]\label{prop:cpl}
For all $C^1$ embedding $\gamma: S^1\to\R^2$ with winding number $1$, the diffeomorphism in Proposition~\ref{prop:c1js}, call it $\phi$, can be chosen so that $\phi\circ\gamma$ is the canonical embedding.
\end{proposition}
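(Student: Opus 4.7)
The plan is to reduce, by applying Proposition~\ref{prop:c1js} and an affine normalization, to the problem of extending an orientation-preserving $C^1$ diffeomorphism of the unit circle to a $C^1$ diffeomorphism of $\R^2$, and then to build such an extension explicitly by working in polar coordinates with a radial cutoff.

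First, apply Proposition~\ref{prop:c1js} to get an orientation-preserving $C^1$ diffeomorphism $\phi_0$ of $\R^2$ carrying $\gamma(S^1)$ onto some Euclidean circle $C$. Post-composing $\phi_0$ with the affine similitude that sends $C$ onto the canonical unit circle, we may assume $\phi_0\circ\gamma$ already takes values in $\gamma_{\mathrm{can}}(S^1)$. Then there is a unique $C^1$ map $\psi\colon S^1\to S^1$ with $\phi_0\circ\gamma=\gamma_{\mathrm{can}}\circ\psi$, and $\psi$ is a $C^1$ diffeomorphism (a $C^1$ injection of a compact $1$-manifold onto another). By Lemma~\ref{lem:JF}, $\phi_0$ preserves winding numbers of tangents; since $\gamma$ has winding number $1$, a direct computation of $\arg\bigl((\gamma_{\mathrm{can}}\circ\psi)'\bigr)$ forces $\psi$ to have degree $+1$, i.e.\ to be orientation-preserving.

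It therefore suffices to produce an orientation-preserving $C^1$ diffeomorphism $\Phi$ of $\R^2$ whose restriction to the unit circle is $\gamma_{\mathrm{can}}\circ\psi^{-1}\circ\gamma_{\mathrm{can}}^{-1}$, for then $\phi=\Phi\circ\phi_0$ yields $\phi\circ\gamma=\gamma_{\mathrm{can}}$. To build $\Phi$, lift $\psi^{-1}$ to a $C^1$ map $\wt\psi^{-1}\colon\R\to\R$ with $\wt\psi^{-1}(x+1)=\wt\psi^{-1}(x)+1$, write $\wt\psi^{-1}(x)=x+\eta(x)$ with $\eta$ $1$-periodic and $1+\eta'>0$, and pick a $C^1$ cutoff $\chi\colon[0,+\infty)\to[0,1]$ supported in $[1/2,2]$ with $\chi(1)=1$. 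Define
\[
\Phi(re^{2\pi ix})=r\exp\bigl(2\pi i(x+\chi(r)\eta(x))\bigr)\quad\text{for }r>0,\qquad \Phi(0)=0.
\]
Because $\chi$ vanishes near $0$ and beyond $2$, $\Phi$ agrees with the identity near the origin and outside the disk of radius $2$, hence is globally $C^1$. On each circle of radius $r>0$ it is the bijective angular reparameterization $x\mapsto x+\chi(r)\eta(x)$, whose derivative $1+\chi(r)\eta'(x)$ is strictly positive since $\chi(r)\in[0,1]$ and $1+\eta'>0$; in polar coordinates the Jacobian matrix of $\Phi$ is lower-triangular with this positive entry on the diagonal. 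Thus $\Phi$ is a local $C^1$ diffeomorphism everywhere, and it is globally bijective because it preserves radii, so it is the required diffeomorphism.

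The main obstacle is exactly this extension step: producing a $C^1$ diffeomorphism of $\R^2$ that agrees with a prescribed $C^1$ diffeomorphism of the unit circle while remaining $C^1$ across the origin, where an unrestricted angular twist would be singular. Confining the reparameterization to a collar of the unit circle via the radial cutoff $\chi$ resolves this cleanly.
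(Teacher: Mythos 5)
Your proof is correct and follows essentially the same route as the paper: reduce to an orientation-preserving $C^1$ diffeomorphism of the unit circle and extend it to $\R^2$ by an angular reparameterization damped by a radial $C^1$ cutoff supported in an annulus, which keeps the map equal to the identity near the origin and at infinity. The paper extends $\phi\circ\gamma\circ\gamma_{\mathrm{can}}^{-1}$ and then composes with the inverse, whereas you extend $\psi^{-1}$ directly; this is an immaterial difference, and your explicit checks (orientation of $\psi$ via Lemma~\ref{lem:JF}, positivity of $1+\chi(r)\eta'(x)$ as a convex combination) are if anything slightly more careful than the paper's.
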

\begin{proof}
Take any $\phi$ given by Proposition~\ref{prop:c1js} and so that the image is the unit circle. Then $f:=\phi\circ\gamma\circ\gamma_{\mathrm{can}}^{-1}$ is an orientation preserving $C^1$ diffeomorphism of the Euclidean circle, which can be extended to $\R^2$ into a $C^1$ diffeomorphism $F$ by an explicit formula, for instance as follows. The map $F$ will be the identity outside the annulus $B(0,3/2)-\ov B(0,1/2)$. Inside, we express $F$ it in polar coordinates:
\bEA
\text{write }\ f(1,\theta) &=& (1,\zeta(\theta))
\\
\text{let }\ F(r,\theta) &:=& (r,(1-\mu(r))\theta+\mu(r)\zeta(\theta))
\eEA
where $\mu\geq 0$ is a  $C^1$ function on $\R$ supported on $[1/2,3/2]$ and with $\mu(1)=1$.
Now replace $\phi$ by $F^{-1}\circ\phi$.
\end{proof}

These imply the corollary below, which is also a well-known result.
\begin{corollary}\label{cor:cnx}
For any pair of $C^1$ embeddings $\gamma_0,\gamma_1:S^1\to\R^2$ such that the winding number of the tangent vector is equal, there is an isotopy in the set of orientation preserving $C^1$ diffeomorphisms carrying one to the other as parameterized curves, i.e.: $\phi_t:\R^2\to\R^2$, $t\in[0,1]$, $\phi_0=\on{Id}_{\R^2}$, $\phi_1\circ \gamma_0=\gamma_1$.
\end{corollary}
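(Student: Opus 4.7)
The plan is to reduce both embeddings to the canonical parameterized circle via Proposition~\ref{prop:cpl}, and then use Lemma~\ref{lem:JF} to produce the required isotopy of $\R^2$. First I would handle the sign of the winding number: by the discussion preceding Proposition~\ref{prop:cpl}, the common winding number is $n=\pm1$. If $n=-1$, I would pass to the reparameterizations $\tilde\gamma_i(s):=\gamma_i(-s)$, which are $C^1$ embeddings of winding number $+1$; any $C^1$ diffeomorphism $\Phi$ of $\R^2$ satisfying $\Phi\circ\tilde\gamma_0=\tilde\gamma_1$ automatically satisfies $\Phi\circ\gamma_0=\gamma_1$ (substitute $s\mapsto -s$), so it is harmless to assume $n=+1$ from here on.

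Next, I would apply Proposition~\ref{prop:cpl} separately to $\gamma_0$ and $\gamma_1$, obtaining orientation preserving $C^1$ diffeomorphisms $\psi_0,\psi_1$ of $\R^2$ with $\psi_i\circ\gamma_i=\gamma_{\mathrm{can}}$. Setting $\Phi:=\psi_1^{-1}\circ\psi_0$ yields an orientation preserving $C^1$ diffeomorphism of $\R^2$ satisfying $\Phi\circ\gamma_0=\gamma_1$. Finally, Lemma~\ref{lem:JF} furnishes a continuous path $\phi_t$ in the space of $C^1$ diffeomorphisms of $\R^2$ with $\phi_0=\mathrm{Id}_{\R^2}$ and $\phi_1=\Phi$; this is the desired ambient isotopy, since by construction $\phi_1\circ\gamma_0=\gamma_1$.

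There is essentially no obstacle: all the work has been done by Proposition~\ref{prop:cpl} (a parameterized strengthening of the $C^1$ Jordan--Schoenflies theorem) and by Lemma~\ref{lem:JF} (path connectedness of the $C^1$ orientation preserving diffeomorphism group of $\R^2$). The only point that requires a moment of attention is the reduction from winding number $-1$ to $+1$, since Proposition~\ref{prop:cpl} is only stated in the $+1$ case.
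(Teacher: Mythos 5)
Your proof is correct and follows exactly the route the paper intends: the paper gives no explicit proof, merely noting that Proposition~\ref{prop:cpl} and Lemma~\ref{lem:JF} ``imply the corollary,'' and your argument (normalize both embeddings to $\gamma_{\mathrm{can}}$, compose, then connect the resulting diffeomorphism to the identity) is precisely that implication spelled out. Your explicit treatment of the winding number $-1$ case via $s\mapsto -s$ is a detail the paper leaves implicit, and it is handled correctly.
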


For the next two statements, let us temporarily denote $\Emb=\Emb_{C^1}(S^1,\R^2)$ the set of $C^1$ embeddings of $S^1$ in $\R^2$. Recall that it is an open subset of the Banach space $C^1(S^1,\R^2)$ endowed with the norm $\max(\sup \|f\|,\sup\|f'\|)$.
Let $\gamma_0\in\Emb$.
Consider the map that, to a $C^1$ diffeomorphism $\phi$ of $\R^2$ associates its composition $\phi\circ\gamma_0 \in\Emb$. The two statement below are concerned with lifting properties of this map.

\begin{lemma}[Local lifting property]\label{lem:llp}
For all $\gamma_0\in\Emb$, there exists $r>0$ and a map $\gamma\mapsto\phi_\gamma$ from the ball $B(\gamma_0,r)$ in the Banach space $C^1(S^1,\R^2)$ to the set of $C^1$ diffeomorphisms of $\R^2$ such that:
\begin{itemize}
\item $B(\gamma_0,r)\subset \Emb$,
\item $\forall \gamma\in B(\gamma_0,r)$, $\phi_\gamma\circ \gamma_0=\gamma$,
\item $\gamma\mapsto \phi_\gamma$ is continuous,
\item $\phi_{\gamma_0}=\on{Id}_{\R^2}$.
\end{itemize}
\end{lemma}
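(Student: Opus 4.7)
The plan is to define $\phi_\gamma$ as a $C^1$ perturbation of the identity whose displacement field, restricted to $\gamma_0(S^1)$, equals $\gamma(s)-\gamma_0(s)$. The key tool is a tubular neighborhood of $\gamma_0(S^1)$, which I will use to extend $\gamma-\gamma_0$ (a priori defined only along the embedded circle) to a $C^1$ vector field on all of $\R^2$.

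Choose a continuous unit normal $\nu_0(s)$ along $\gamma_0$, available because $\gamma_0'$ is continuous and nonvanishing. Since $\gamma_0$ is a $C^1$ embedding of the compact circle, there is $\delta>0$ such that $\Psi(s,t)=\gamma_0(s)+t\nu_0(s)$ is a $C^1$ diffeomorphism $S^1\times(-\delta,\delta)\to U\subset\R^2$. Pick a $C^1$ bump function $\rho:\R\to[0,1]$ with $\rho(0)=1$ and $\on{supp}\rho\subset(-\delta/2,\delta/2)$. For $\gamma\in B(\gamma_0,r)$ (with $r>0$ to be chosen), define a vector field on $\R^2$ by
\[
\tilde W_\gamma(\Psi(s,t)) := \rho(t)\bigl(\gamma(s)-\gamma_0(s)\bigr)
\]
on $U$ and extend by zero outside, and set $\phi_\gamma := \on{Id}_{\R^2}+\tilde W_\gamma$. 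A direct check gives $\phi_\gamma\circ\gamma_0=\gamma$ and $\phi_{\gamma_0}=\on{Id}_{\R^2}$.

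Continuity of $\gamma\mapsto\phi_\gamma$ from $C^1(S^1,\R^2)$ to $C^1(\R^2,\R^2)$ follows from the fact that the correspondence $(\gamma-\gamma_0)\mapsto\tilde W_\gamma$ is a bounded linear operator whose norm depends only on $\rho$ and on the $C^1$ norms of $\Psi$ and $\Psi^{-1}$ on their respective domains. This also makes the first three bullet points straightforward once $r$ is small enough.

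The main obstacle is verifying that $\phi_\gamma$ is actually a diffeomorphism of $\R^2$, not merely a $C^1$ self-map. Shrinking $r$ if needed, I can arrange $\|\tilde W_\gamma\|_{C^1(\R^2,\R^2)}<1/2$. Then on the one hand $d\phi_\gamma=\on{Id}+d\tilde W_\gamma$ is invertible at every point, so $\phi_\gamma$ is a local $C^1$ diffeomorphism by the inverse function theorem. On the other hand, for every $y\in\R^2$ the map $x\mapsto y-\tilde W_\gamma(x)$ is a contraction of the complete space $\R^2$, hence admits a unique fixed point; so $\phi_\gamma$ is a bijection. A bijective $C^1$ local diffeomorphism is a $C^1$ diffeomorphism, and $B(\gamma_0,r)\subset\Emb$ follows because $\gamma=\phi_\gamma\circ\gamma_0$ is then a composition of embeddings, concluding the proof.
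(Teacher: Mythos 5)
Your overall strategy (work in tubular-neighborhood coordinates, write $\phi_\gamma$ as the identity plus a compactly supported displacement field that restricts to $\gamma-\gamma_0$ along the curve, and verify bijectivity by the Banach fixed point theorem) is sound and is a legitimate alternative to the paper's construction, which instead builds $\phi_\gamma$ inside the tube as a shear $(x,y)\mapsto(p_1\circ\gamma(x),\zeta_{p_2\circ\gamma(x)}(y))$ whose diffeomorphism property is read off directly from the formula. Your contraction argument is a clean way to get global invertibility, and the verifications of $\phi_\gamma\circ\gamma_0=\gamma$, of $\phi_{\gamma_0}=\mathrm{Id}$, and of continuity are correct \emph{granted} that the chart $\Psi$ is a $C^1$ diffeomorphism.

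That is where there is a genuine gap: since $\gamma_0$ is only $C^1$, the unit normal $\nu_0(s)$ is only continuous, not differentiable, so $\Psi(s,t)=\gamma_0(s)+t\,\nu_0(s)$ has $\partial_s\Psi(s,t)=\gamma_0'(s)+t\,\nu_0'(s)$, which does not exist for $t\neq 0$ in general. Hence $\Psi$ is not a $C^1$ diffeomorphism, $\Psi^{-1}$ is not $C^1$, and the field $\tilde W_\gamma=\bigl[\rho(t)(\gamma-\gamma_0)(s)\bigr]\circ\Psi^{-1}$ need not be $C^1$; the subsequent bound $\|\tilde W_\gamma\|_{C^1}<1/2$, on which both the local invertibility and the contraction estimate rest, then has no meaning. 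The repair is standard but must be said: replace $\nu_0$ by a $C^1$ (e.g.\ $C^\infty$, obtained by convolution) vector field $\nu$ along $S^1$ that is uniformly close to $\nu_0$, hence everywhere transverse to $\gamma_0'$; then $\Psi(s,t)=\gamma_0(s)+t\,\nu(s)$ is $C^1$ with invertible differential along $t=0$, and a compactness argument gives $\delta>0$ for which it is a $C^1$ diffeomorphism onto a neighborhood of the image of $\gamma_0$. (The paper avoids the issue by simply invoking the existence of a $C^1$ tubular neighborhood with a $C^1$ trivialization rather than constructing it from the normal field.) With that correction your proof goes through.
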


\begin{proof}
Consider a tubular neighborhood $V$ of the image of $\gamma_0$, together with a trivialization, i.e.\ a $C^1$ map from $V$ to $C=\R/\Z\times\,]-\epsilon,\epsilon[$ sending the image of $\gamma_0$ to $\R/\Z \times \{0\}$. By this change of variable, it is enough to prove a similar statement on $C^1$ embeddings of $\R/\Z$ in $C$, with $\gamma_0$ being the trivial embedding, i.e.\ $\gamma_0(s)=(s,0)$, and getting $C^1$ diffeomorphisms of $C$ that are the identity outside $\R/\Z\times\,]-\epsilon/2,\epsilon/2[$.
Fix a family of functions $\zeta_a(y)$, with $a\in\,]-\epsilon/2,\epsilon/2[$ and $y\in\,]-\epsilon,\epsilon[$ such $\zeta_a(y)=y$ when $y\notin\,]-\epsilon/2,\epsilon/2[$, such that $\zeta_a(0)=a$, such that $(a,y)\mapsto \zeta_a(y)$ is $C^1$, such that $\zeta'_a>0$ for all $a$ and such that $\zeta_0=\on{Id}_C$. Existence is left to the reader, it can even be chosen $C^\infty$.
Now assume $\sup \|\gamma-\gamma_0\|\leq \epsilon/4$. Then the image of $\gamma_0$ is contained in $\R/\Z\times ]-\epsilon/4,\epsilon/4[$. 
Let $p_1,p_2:\R/\Z\times]-\epsilon,\epsilon[\to\R/\Z$ denote the first and second projections.
Assume $\sup\|\gamma'-\gamma'_0\|\leq 1/2$. Then $p_1\circ \gamma$ is a $C^1$ diffeomorphism of $\R/\Z$.
We let $\phi_\gamma(x,y)= (p_1\circ \gamma(x),\zeta_{p_2\circ \gamma(x)}(y))$.
\end{proof}

\begin{corollary}[Path lifting property]\label{cor:plp}
If $u\in[0,1]\mapsto\gamma_u$ is a continuous path in $\Emb$, then there exists a continuous path $u\mapsto\phi_u$ of $C^1$ diffeomorphisms  of $\R^2$, with $\phi_0=\on{Id}_{\R^2}$ and such that $\forall u$, $\phi_u\circ \gamma_0 = \gamma_u$.
\end{corollary}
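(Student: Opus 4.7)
The plan is to lift the path $u\mapsto\gamma_u$ piece by piece using the local lifts provided by Lemma~\ref{lem:llp}, and then concatenate. The key observation that makes concatenation continuous is that each local lift sends the base point of its ball to $\on{Id}_{\R^2}$.

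First, by Lemma~\ref{lem:llp}, every $\beta\in\Emb$ admits a radius $r_\beta>0$ and a continuous map $\gamma\mapsto\phi^\beta_\gamma$ from $B(\beta,r_\beta)\subset C^1(S^1,\R^2)$ to the space of $C^1$ diffeomorphisms of $\R^2$ (with the topology of uniform convergence of maps and first derivatives on compacts) such that $\phi^\beta_\gamma\circ\beta=\gamma$ and $\phi^\beta_\beta=\on{Id}_{\R^2}$. Since $u\mapsto\gamma_u$ is continuous and $[0,1]$ is compact, its image is a compact subset of $\Emb$; by a standard Lebesgue number argument we may choose a subdivision $0=t_0<t_1<\cdots<t_N=1$ such that for each $i$ the arc $\{\gamma_u:u\in[t_i,t_{i+1}]\}$ is contained in the ball $B(\gamma_{t_i},r_{\gamma_{t_i}})$ on which the local lift based at $\gamma_{t_i}$ is defined.

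I would then construct $\phi_u$ inductively over the intervals $[t_i,t_{i+1}]$. On $[t_0,t_1]$, set $\phi_u=\phi^{\gamma_0}_{\gamma_u}$; this gives $\phi_0=\on{Id}_{\R^2}$, continuity in $u$, and $\phi_u\circ\gamma_0=\gamma_u$. Having defined a continuous $\phi_u$ on $[0,t_i]$ with $\phi_u\circ\gamma_0=\gamma_u$ throughout, I define on $[t_i,t_{i+1}]$
\[
\phi_u\;:=\;\phi^{\gamma_{t_i}}_{\gamma_u}\circ\phi_{t_i}.
\]
At $u=t_i$ the factor $\phi^{\gamma_{t_i}}_{\gamma_{t_i}}$ equals $\on{Id}_{\R^2}$, so the two definitions agree at the seam; for any $u$ in the new interval,
\[
\phi_u\circ\gamma_0=\phi^{\gamma_{t_i}}_{\gamma_u}\circ\phi_{t_i}\circ\gamma_0=\phi^{\gamma_{t_i}}_{\gamma_u}\circ\gamma_{t_i}=\gamma_u,
\]
as required. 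Continuity in $u$ on the new interval follows from continuity of $u\mapsto\gamma_u$, continuity of the local lift $\gamma\mapsto\phi^{\gamma_{t_i}}_\gamma$, and continuity of right-composition by a fixed diffeomorphism in the chosen topology.

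The only point that needs a bit of care is that the overall construction stays within $C^1$ diffeomorphisms of $\R^2$ and is continuous in $u$: continuity at each seam $u=t_i$ is immediate because the right-hand factor in the formula degenerates to the identity there, and continuity of the composition $(\phi,\psi)\mapsto\phi\circ\psi$ in the $C^1$-uniform-on-compacts topology (together with continuity of inversion, already noted in the paper after Proposition~\ref{prop:c1js}) ensures that the inductive step preserves continuity. I do not expect any genuine obstacle; the argument is the standard deduction of a path-lifting property from a local lifting property, with the idempotent-at-the-base normalization $\phi^\beta_\beta=\on{Id}_{\R^2}$ doing all the work at the seams.
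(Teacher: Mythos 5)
Your proof is correct and follows essentially the same route as the paper: local lifts from Lemma~\ref{lem:llp}, a finite subdivision of $[0,1]$ obtained by compactness, and iterated composition of the local lifts, with the normalization $\phi^{\beta}_{\beta}=\on{Id}_{\R^2}$ guaranteeing agreement at the seams. The one point to treat with care (which the paper's own write-up also glosses over) is that the Lebesgue-number argument only yields sub-arcs contained in balls centered at finitely many points $\gamma_{v_i}$ of the path, not necessarily at the left endpoints $\gamma_{t_i}$ --- and since Lemma~\ref{lem:llp} does not assert that $r_{\beta}$ is locally bounded below, the inclusion $\{\gamma_u : u\in[t_i,t_{i+1}]\}\subset B(\gamma_{t_i},r_{\gamma_{t_i}})$ you assert can fail for a bad choice of the radii; the standard repair is to use the lift based at $\gamma_{v_i}$ and insert the correction factor $(\phi^{\gamma_{v_i}}_{\gamma_{t_i}})^{-1}$, which restores the identity at the seam and leaves the rest of your argument unchanged.
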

\begin{proof}In short: by Lemma~\ref{lem:llp}, this can be done locally; 
we then take a finite cover of $[0,1]$ and compose the maps $\phi_\gamma$.
In details this gives: for all $u\in[0,1]$ there is a map $\gamma\mapsto\phi^u_\gamma$, continuous with respect to $\gamma$, with $\phi^u_{\gamma_u}=\on{Id}_{\R^2}$ and $\phi^u_{\gamma}\circ\gamma_u=\gamma$, defined for $\gamma$ close enough to $\gamma_u$, i.e.\ $d(\gamma,\gamma_u)<r_u$ for the distance on the Banach space $C^1$. Now there is some $\epsilon(u)>0$ such that $|u'-u|<\epsilon(u)$ $\implies$ $d(\gamma_u,\gamma_{u'})<r_u$ (uniform continuity over a compact interval).
There exists $\eta>0$ such that the following holds: $\forall u\in[0,1]$, $\exists v\in[0,1]$ with $B(u,\eta)\subset B(v,\epsilon(v))$. (The proof is a standard argument by contradiction and sequence extraction.) Choose $N>1/\eta$ and consider the finite sequence $u_k=k/N$ with $0\leq k \leq N$. We just saw that each $[u_k,u_{k+1}]$ is contained in some $B(v_k,\epsilon(v_k))$.
For $1\leq k\leq N$ let $\wh\phi_k=\phi^{u_{k-1}}_{\gamma_{u_{k}}}$; then $\wh\phi_k \circ \gamma_{u_{k-1}} = \gamma_{u_k}$.
Then for all $k$ with $0\leq k\leq N$ and for all $u\in[0,1]$ with $u_k\leq u\leq u_{k+1}$ we define
$\phi_u = \phi^{u_k}_{\gamma_u} \circ \wh\phi_{k} \circ \cdots \circ \wh\phi_{1}$ (this composition reduces to the leftmost term if $k=0$).
\end{proof}

\subsection{Proof of Lemma~\ref{lem:recap}}\label{app:recap}

Extension of $C^k$ functions of several variables, for any $k>0$, is trickier than it looks.
We recall the technique of symmetrization for extending a $C^1$ function on a half plane: assume $f$ is defined and continuous on the subset $x\leq 0$ of $\R^2$, that $f$ is $C^1$ on $x<0$ and that its partial derivatives have continuous extensions to $x\leq 0$. Then the extension of $f$ defined for  $x>0$ by $f(x,y)=2f(0,y)-f(-x,y)$ is $C^1$.
This technique can be used also to extend a map defined on a half-cylinder, or in a neighborhood of any differentiable curve where a local differentiable reflection can be defined.

We now explain how to add a bottom cap, the procedure is to be repeated to get the top cap. Recall that Lemma~\ref{lem:recap} has two statements.

\smallskip\noindent Proof of Statement~\eqref{item:1}: The restriction map is surjective.\par\smallskip

The bottom curve is parameterized by the Euclidean circle obtained by slicing the Euclidean sphere $S^2$ at height $z=z_0$.
By Proposition~\ref{prop:cpl}, there is a $C^1$ diffeomorphism $\phi$ of $\R^2$ mapping this bottom curve back to the identity on this circle. 
Extend it to $\R^3$ by setting $\Phi(x,y,z)=(\phi(x,y),z)$. It is enough to add a cap to $\Phi\circ f$ and to map it back by $\Phi^{-1}$.
Now if we just added the lower cap of $S^2$ using the identity, the surface we get would only be $C^0$ at the level of the seam: indeed the one-sided partial derivative along the meridians of the sphere just above the seam and just below the seam do not coincide. This can be solved with an explicit formula.
First consider a $C^1$ extension of the map $f$ to a neighborhood of the circle $z=z_0$. Call it $\wt f$ and say it is defined for $z>z_0-\epsilon$ for some $\epsilon>0$. Decrease $\epsilon$ so that $\wt f$ is an embedding on the part of $S^2$ with $z\in]z_0-\epsilon,z_0]$.  Consider a $C^1$ function $\chi :\R\to\R$ with $\chi(x)=0$ if $x\leq 0$ and $\chi(x)=1$ if $x\geq 1$.
Let us use the notation
\[ \on{lin}(a,b,t)=(1-t)a+tb
.\]
Now for $P\in S^2$ denote by $P_z$ its height coordinate, let $0<\eta<\epsilon/2$ be small and let
$g: S^2 \to \R^3$ be defined by
\bEA 
(P_z\geq z_0)\quad g(P) &=& f(P),
\\ (z_0> P_z> z_0-\eta)\quad g(P) &=& \on{lin}\left(\wt f(P),P,\chi\Big(\frac{z_0-z}{\eta}\Big)\right),
\\ (z_0-\eta\geq P_z)\quad g(P)&=& P.
\eEA
It preserves height and for $\eta$ small enough it is an embedding on the part $z\leq z_0$ of the sphere: this is trivially true below $z_0-\eta$ and between $z_0-\eta$ and $z_0$ we have a linear interpolation between the identity on a circle and a map $\wt f$ that is close to the identity for the $C^1$ norm if $\eta$ is small.

\smallskip\noindent Proof of Statement~\eqref{item:2}: Lifting property.\par\smallskip

Let $u\in[0,1]$ be the parameter for the continuous path $u\mapsto f_u$, with $f_u:(s,z) \to (\zeta_u(s,z),z)\in\R^2\times\R$, $s\in S^1$ and $z\in[z_0,z_1]$.

We first reduce to the case when the bottom curve does not change, as a parameterized curve, when $u$ changes: Corollary~\ref{cor:plp} applied to $\gamma_u(s)=\zeta_u(s,z_0)$ gives us an isotopy by $C^1$ diffeomorphisms $\phi_u$ of $\R^2$ with $\phi_0=\on{Id}_{\R^2}$ and $\phi_u\circ \gamma_0 = \gamma_u$. Let us extend this isotopy to $\R^3$ as $\Psi_u(x,y,z)=(\phi_u(x,y),z)\in\R^2\times \R$. We replace the family $f_u$ by $\Phi_u^{-1}\circ f_u$ and get a new family where the bottom curve is independent of $u$. We will then define a movement of its cap. Composing back by $\Phi_u$ will give the sought-for family $g_u$.

As in the proof of Statement~\eqref{item:1}, we use Proposition~\ref{prop:cpl} to get a $C^1$ diffeomorphism $\phi$ of $\R^2$ mapping the bottom curve to the identity on the circle $S^2\cap ``z=z_0"$ and extend it to $\R^3$ as $\Phi(x,y,z)=(\phi(x,y),z)$.

So we are reduced to the case where the bottom curve is independent of $u$ and is the identity on the circle $S^2\cap ``z=z_0"$. However, we still have a derivative along the meridians that depends on $u$, so following the cap is not trivial yet.

We can do it with a similar same perturbation method as in the proof of the other statement. Instead of interpolating linearly between an extension $\wt f_u$ of $f_u$ and the identity on $S^2$ we interpolate between $\wt f_u$ and the initial cap $g_0$. Then one has to see that a uniform (i.e.\ independent of $u$) value of $\eta$ can be chosen. Details are left to the reader.

\subsection{Proof of Proposition~\ref{prop:d}}

We are given a element $g\in\cal I_H$, i.e.\ a height preserving $C^1$ immersion from a Euclidean $S^2\subset\R^3$ to $\R^3$.

Let us remove caps from $g$: between the maximal and minimal heights $z_{\min}<z_{\max}$ of the image of $g$, there are some heights $z_0$ and $z_1$ with $z_{\min}<z_0<z_1<z_{\max}$ and such that the image of $g$ is an embedded surface below $z_0$ and above $z_1$. We now consider the part between $z_0$ and $z_1$.
To reparameterize this part by a cylinder, consider the height preserving cylindrical projection $p:S^1\times[z_0,z_1]\to S^2$ and let $f = g\circ p$, that we decompose as
\[ f(s,z) = (\gamma_z(s),z) \in \R^2\times \R
.\]

Recall that horizontal slices are $C^1$ immersed curves that depend continuously\footnote{As remarked earlier, the fact that $f$ is $C^1$ is stronger than this claim,
but it is weaker than asking that $z\mapsto\gamma_z$ is in a $C^1$ path in the Banach space $C^1(S^1,\R^2)$.} on the height: the map $z\mapsto \gamma_z$ is $C^0$ as a function from $[z_0,z_1]$ to the Banach space $C^1(S^1,\R^2)$, i.e.\ there is uniform convergence of $\gamma_z$ and of $\gamma_z'$ as $z$ converges to some height. The winding number $n$ of the tangent vector is thus independent of the height and since the curve is embedded for $z=z_0$, we get $n=\pm 1$.

The strategy is to apply Lemma~\ref{lem:a} to simultaneously untangle all these curves; then apply Lemma~\ref{lem:c} to smooth the result and finally Lemma~\ref{lem:recap} to recover the caps.

It shall be noted that to apply Lemma~\ref{lem:a} we would need the tangent vector at the basepoint to point in a constant direction, but this is not necessarily the case for the family $\gamma_z$. Also, to put back caps, Lemma~\ref{lem:recap} requires that the top and bottom curves remain embedded while we deform. The best way to do this is to ensure that the top and bottom curves are the fixed point of the retraction of Lemma~\ref{lem:a}. See below for the details.

For convenience, we now identify $\R^2$ with $\C$.
Consider the embedding
\[ \alpha:\left\{\begin{array}{l} \R/\Z\to\C\\ s\mapsto (e^{2\pi nis}-1)/2\pi in \end{array}\right
.\]
It is an element of $\cal I_n$. Better, we normalized it so that $\alpha(0)=0$ and $\alpha'(0)=1$. 

We choose an orientation preserving $C^1$ diffeomorphism $\phi_{z_0}$ of $\R^2$ such that $\phi_{z_0}\circ\gamma_{z_0} =\alpha$. Its existence follows from Proposition~\ref{prop:cpl} applied to $\gamma$ (if the winding number is $-1$, apply the proposition to $\gamma$ followed backwards).
Similarly we choose an orientation preserving $C^1$ diffeomorphism $\phi_{z_1}$ of $\R^2$ such that $\phi_{z_1}\circ\gamma_{z_1} =\alpha$.
By Corollary~\ref{cor:cnx} there is a continuous path $z\in[z_0,z_1]\mapsto\phi_z$  between $\phi_{z_0}$ and $\phi_{z_1}$ in the set of orientation preserving $C^1$ diffeomorphisms endowed with the topology of uniform convergence on compact sets of maps and of their first order differential. Let
\[
\Phi:\left\{\begin{array}{l} \R^2\times[z_0,z_1]\to \R^2\times[z_0,z_1]
\\
(x,y,z)\mapsto(\phi_z(x,y),z)\end{array}\right
.\]
Since we have not required $\phi_z$ to have a partial derivative w.r.t. $z$, the map $\Phi$ above is not necessarily globally $C^1$, even though it is $C^1$ on each horizontal plane.
Let
\newcommand{\va}{a}
\newcommand{\vb}{b}
\[ \va_z = \phi_z\circ\gamma_z
.\]
In particular $\va_{z_0}=\alpha$ and $\va_{z_1}=\alpha$.
Consider now the $\C$-affine maps
\[A_z: \zeta\in\C\mapsto \va_z'(0) \zeta + a_z(0).\]
Note that $A_{z_0}=\on{Id}_{\C}$ and $A_{z_1}=\on{Id}_{\C}$. Let now
\[ \vb_z = A_z^{-1}\circ \va_z
.\]
It shall be noted that for all $z$, $\vb_z\in\cal I_n$ and that $b_{z_0}=\alpha$ and $b_{z_1}=\alpha$.

Let now $t\in[0,1]\mapsto\Theta_t:\cal I_n\to\cal I_n$ be a strong deformation retraction to $\alpha$, as provided by Lemma~\ref{lem:a}.
We define for all $z\in[z_0,z_1]$:
\bEA
\vb^t_z &=& \Theta_t(\vb_z) , \\
\va^t_z &=& A_z\circ \vb^t_z , \\
\wt \gamma^t_z &=& \phi_z^{-1} \circ \va^t_z , \\
\wt f^t(s,z) &=& (\wt\gamma^t_z(s),z) .
\eEA
Then $\vb_{z_0}^t = \alpha$ and $\vb_{z_1}^t = \alpha$.
Also, for all $z$, $\vb^1_z=\alpha$. Hence for all $z,t$, the curves $\wt \gamma_{z_0}^t$, $\wt \gamma_{z_1}^t$ and
$\wt \gamma^1_z$ are embedded. Also, for all $z$, $\wt \gamma_z^0=\gamma_z$.
The hypotheses of Lemma~\ref{lem:c} are thus satisfied with $\wt h(z,t,s)=\wt f^t(s,z)$. There exists thus a homotopy $h(z,t,s)=h(z,t)(s)$ with
 $\forall t$, $h(0,t)=\gamma_t$, 
 $\forall z$, the map $h_z:(t,s)\mapsto h(z,t,s)$ is $C^1$,
 $z\mapsto h_z$ is continuous from $[0,1]$ to the Banach space $C^1([0,1]\times S^1,\R^2)$ and $\forall z,t$, the curves $h(1,t)$, $h(z,0)$ and $h(z,1)$ are embeddings.
Let now
\[
f^t(s,z) = (h(z,t,s),z).
\]

Let $S'\ =$ the Euclidean sphere $S^2$ we started from, minus its caps.
Then $f_t\circ p^{-1}$ is a family of height preserving $C^1$ immersion of $S'$ in $\R^3$. It varies continuously with $t$ in the corresponding Banach space. It starts from the restriction to $S'$ of the initial map $g\in \cal I_H$ and ends with an embedding of $S'$.

Now it is time to put back the caps. This is done using Lemma~\ref{lem:recap}, Statement~\eqref{item:2}. For the initial function $g_0$ in the hypotheses of the lemma we use $g$. For the functions $f_t$ of the lemma we use the functions $f_t\circ p^{-1}$ above.
The conclusion of the lemma gives us the homotopy $g_t\in \cal I_H$.

\bibliographystyle{alpha}
\bibliography{bib}

\end{document}